\theoremstyle{plain}
\newtheorem{theorem}{Theorem}[section]
\newtheorem{lemma}[theorem]{Lemma}
\newtheorem{proposition}[theorem]{Proposition}
\newtheorem{corollary}[theorem]{Corollary}
\theoremstyle{definition}
\newtheorem{definition}[theorem]{Definition}
\newtheorem{example}[theorem]{Example}
\newcommand{\comments}[1]{}
\begin{document}

\title{On the degree pairs of a graph}

\author{
Yu-pei Huang
\thanks{College of Applied Mathematics, Beijing Normal University, Zhuhai 519087, China.
{\tt Email: yphuang@bnuz.edu.cn}}
\and
Chia-an Liu
\thanks{Department of Mathematical Sciences, University of Delaware Newark, Delaware 19716, U.S.A.
{\tt Email: liuchiaan8@gmail.com}}
\and
Chih-wen Weng
\thanks{Department of Applied Mathematics, National Chiao Tung University, Hsinchu 30010, Taiwan.
{\tt Email: weng@math.nctu.edu.tw}}
}

\date{\small November 6, 2018}

\maketitle

\begin{abstract}
Let $G$ be a simple graph without isolated vertices. For a vertex $i$ in $G$, the degree $d_i$ is the number of vertices adjacent to $i$ and the
average $2$-degree $m_i$ is the mean of the degrees of the vertices which are adjacent to $i$. The sequence of pairs $(d_i, m_i)$ is called the sequence of degree pairs of $G$.
We provide some necessary conditions for a sequence of real pairs $(a_i, b_i)$ of length $n$ to be the degree pairs of a graph of order $n$. A graph $G$ is called pseudo $k$-regular if $m_i=k$ for every vertex $i$ while $d_i$ is not a constant. Let $N(k)$ denote the minimum number of vertices in a pseudo $k$-regular graph.
We utilize the above necessary conditions to find all pseudo $3$-regular graphs of orders no more than $10,$ and all pseudo $k$-regular graphs of order $N(k)$ for $k$ up to $7$. We give bounds of $N(k)$ and show that $N(k)$ is at most $k+6.$
%We gives bounds of $N(k)$ and show that
%the ratio $N(2k+1)/(2k+1)$ tends to $1$ as $k$ tends infinity.
\end{abstract}

{\noindent} Keywords: Degree, average $2$-degree, harmonic graphs, pseudo regular graphs.

{\noindent} 2010 MSC: 05C07.

\section{Introduction}
Throughout this paper, let $G$ be a simple graph with vertex set $V(G)=\{1, 2, \ldots, n\}$ and edge set $E(G)$ which has no isolated vertices. Let $d_i$ and $m_i$ be the degree and \emph{average $2$-degree} of  vertex $i\in V(G)$, where $d_i=\sum_{j:ji\in E(G)}1$ and $m_i={d_i}^{-1}\sum_{j:ji\in E(G)}d_j.$  The sequence $(d_i, m_i)_{i=1}^n$ of pairs is called the sequence of \emph{degree pairs} of $G$.
\medskip

Degree sequences of graphs have been well-studied for a long time, e.g., \cite{60:eg}.
The additional average $2$-degree sequence $m_i$ of graphs will provide more information about a graph.
Indeed the sequence of degree pairs $(d_i,m_i)_{i=1}^n$ has already appeared in literature. For example in the extremal spectral theory, many upper bounds of eigenvalues are expressed by a function of the sequence \cite{11:cpz,04:d,14:hw,01:lp,98:m,04:z}. From application aspects, the parameters $d_i$ and $m_i$ are natural in the study of a two-way communication network \cite{blj:17}, especially in the investigation of the distance-$2$ structure of the network, in which the simple graph model is employed with vertices representing the nodes and edges the links. The number $d_i$ of links at node $i$ is easy acquainted by the node $i$ itself, while the number $d_j$ of a node $j$ that is adjacent to $i$ might not be obtained easily, but at least node $i$ has a rough idea of the mean number $m_i$ of $d_j$, especially when this number happened to be an integer from some other network structure information. Other related parameters which are introduced recently in the study of distance-$2$ structure of a graph can be found in \cite{chh:17}.
\medskip

The sequence of degree pairs might have applications in cryptography as the factorization of an integer does.
It is easy for a graph (resp. a pair of prime numbers) to generate its sequence of degree pairs (resp. their product), but much harder for the reverse. Hence the sequence of degree pairs of a graph can track the graph while easily keeping secret of the exact information of the graph. Unfortunately two different graphs might have the same sequence of degree pairs.
For instance, the two graphs in  Figure 1 are not isomorphic, but have the same sequence $(2, 3)$, $(3, 3)$, $(3, 3)$, $(4, 3)$, $(3, 3)$, $(3, 3)$, $(2, 3)$  of degree pairs.
\begin{center}

%\begin{multicols}{2}
\begin{picture}(100,60)
\put(10,30){\circle*{3}}
\put(30,10){\circle*{3}}    \put(30,50){\circle*{3}}
\put(50,30){\circle*{3}}
\put(70,10){\circle*{3}}    \put(70,50){\circle*{3}}
\put(90,30){\circle*{3}}
\qbezier(50,30)(40,20)(30,10) \qbezier(50,30)(40,40)(30,50)
\qbezier(50,30)(60,20)(70,10) \qbezier(50,30)(60,40)(70,50)
\qbezier(10,30)(20,20)(30,10) \qbezier(10,30)(20,40)(30,50)
\qbezier(90,30)(80,20)(70,10) \qbezier(90,30)(80,40)(70,50)
\qbezier(30,10)(50,10)(70,10) \qbezier(30,50)(50,50)(70,50)
\end{picture}
~~~~~~~~~~
\begin{picture}(100,60)
\put(10,30){\circle*{3}}
\put(30,10){\circle*{3}}    \put(30,50){\circle*{3}}
\put(50,30){\circle*{3}}
\put(70,10){\circle*{3}}    \put(70,50){\circle*{3}}
\put(90,30){\circle*{3}}
\qbezier(50,30)(40,20)(30,10) \qbezier(50,30)(40,40)(30,50)
\qbezier(50,30)(60,20)(70,10) \qbezier(50,30)(60,40)(70,50)
\qbezier(10,30)(20,20)(30,10) \qbezier(10,30)(20,40)(30,50)
\qbezier(90,30)(80,20)(70,10) \qbezier(90,30)(80,40)(70,50)
\qbezier(30,10)(30,30)(30,50) \qbezier(70,10)(70,30)(70,50)
\end{picture}

%\end{multicols}

\bigskip
{\bf Figure 1:} Two graphs with the same sequence of degree pairs.
\end{center}
On the other hand, there are also graphs that are uniquely determined by their sequences of  degree pairs. Two examples are provided in Figure 2.

\begin{center}
\begin{picture}(100, 70)
\put(10,10){\circle*{3}}
\put(-5,-5){$(1,3)$}
\put(50,10){\circle*{3}}    \put(50,50){\circle*{3}}
\put(35,-5){$(3,\frac{5}{3})$}  \put(75,-5){$(2,\frac{5}{2})$}
\put(90,10){\circle*{3}}    \put(90,50){\circle*{3}}
\put(95,50){$(2,2)$}  \put(20,50){$(2,\frac{5}{2})$}
\qbezier(10,10)(30,10)(50,10)   \qbezier(50,10)(50,30)(50,50)
\qbezier(50,50)(70,50)(90,50)   \qbezier(90,50)(90,30)(90,10)
\qbezier(50,10)(70,10)(90,10)
\end{picture}
~~~~~~~~~~~~
\begin{picture}(100,60)
\put(10,10){\circle*{3}}
\put(-5,-5){$(1,2)$}
\put(50,10){\circle*{3}}    \put(50,50){\circle*{3}}
\put(35,-5){$(2,2)$}  \put(75,-5){$(2,\frac{5}{2})$}
\put(90,10){\circle*{3}}    \put(90,50){\circle*{3}}
\put(95,50){$(2,\frac{5}{2})$}  \put(20,50){$(3,2)$}
\qbezier(10,10)(30,10)(50,10)   \qbezier(50,10)(50,30)(50,50)
\qbezier(50,50)(70,50)(90,50)   \qbezier(90,50)(90,30)(90,10)
\qbezier(50,50)(70,30)(90,10)
\end{picture}
%\end{multicols}

\bigskip
\bigskip

{\bf Figure 2:} Two graphs uniquely determined by their sequences of degree pairs.
\end{center}

\bigskip

A graph $G$ is called $k$-\emph{harmonic} if $m_i=k$ for all vertices $i$.   Dress and Gutman showed that for a graph $G$ and a  positive integer $k$, the number of walks of length $\ell\geq 1$ in $G$ equals $2|E(G)|k^{\ell-1}$ if and only if $G$ is $k$-harmonic \cite{03:dgr}.
They also proved that $k$ is an integer in a $k$-harmonic graph. More families of graphs involved with the harmonic concept are proposed in~\cite{02:g,03:bggp,03:dgr}. The class of $k$-regular graphs, i.e. $d_i=k$ for all vertices $i$, is a subclass of $k$-harmonic graphs. A graph is \emph{pseudo $k$-regular} if it is $k$-harmonic but not $k$-regular.
Let $N(k)$ denote the minimum number of vertices in a pseudo $k$-regular graph.
\medskip

The paper is organized as follows. Some necessary conditions for a sequence of real pairs to be degree pairs are
given in Section~\ref{sec_deg_pairs}. In Section~\ref{indep}, we provide a lower bound of the independent number of the square graph $G^2$ of $G$ in terms of the sequence of degree pairs of $G$.
In Section~\ref{sec_pse_kreg}, we determine all pseudo $3$-regular graphs of order no more than $10$.
We aim at the study of minimal pseudo regular graphs. In Section~\ref{sec_min_prg}, we determine all pseudo $k$-regular graphs of order $N(k)$ for $k$ up to $7$, give bounds of $N(k)$ and construct pseudo $k$-regular graphs of order $k+4$ and $k+6$ for odd and even $k,$ respectively.
%show that the ratio $N(2k+1)/(2k+1)$ tends to $1$ as $k$ tends infinity.
Open problems are provided in Section~\ref{sec_open} as a concluding remark of this article.

\bigskip

\section{The necessary conditions}       \label{sec_deg_pairs}
Some necessary conditions of a sequence of real pairs to be degree pairs are
 given in this section.

\subsection{Criteria on the sequence of degree pairs}

Let $G$ be a graph with degree pairs $\{(d_i, m_i)\}_{i=1}^n.$  We begin with a feasible condition of degree pairs.

\begin{lemma}\label{lem_sum_dimi}
$$\sum_{i=1}^n d_im_i = \sum_{i=1}^n d_i^2.$$
\end{lemma}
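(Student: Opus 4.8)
The plan is to unwind the definition of $m_i$ and then interchange the order of summation. First I would clear the denominator in the definition $m_i = d_i^{-1}\sum_{j:\,ji\in E(G)} d_j$ to obtain the cleaner identity $d_i m_i = \sum_{j:\,ji\in E(G)} d_j$. Since $G$ has no isolated vertices, every $d_i$ is positive, so multiplying through by $d_i$ is legitimate at every vertex and the quantity $d_i m_i$ is simply the sum of the degrees of the neighbors of $i$.

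Summing this over all $i$ gives $\sum_{i=1}^n d_i m_i = \sum_{i=1}^n \sum_{j:\,ji\in E(G)} d_j$, a double sum over all ordered pairs $(i,j)$ of adjacent vertices whose summand depends only on the second coordinate $j$. The key step is to reorganize this sum by fixing $j$ first: the number of vertices $i$ adjacent to a given $j$ is exactly $d_j$, so the total contribution of $j$ is $d_j\cdot d_j = d_j^2$. Interchanging the order of summation therefore produces $\sum_{j=1}^n d_j^2$, which is precisely the claimed right-hand side.

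An equivalent and perhaps slicker route is through the adjacency matrix $A$ and the degree vector $\mathbf{d}=(d_1,\dots,d_n)^\top$: the identity $d_i m_i = \sum_{j:\,ji\in E(G)} d_j$ says exactly that the vector with entries $d_i m_i$ equals $A\mathbf{d}$. Hence $\sum_{i=1}^n d_i m_i = \mathbf{1}^\top A\mathbf{d}$, where $\mathbf{1}$ is the all-ones vector, and since the column sums of $A$ are the degrees one has $\mathbf{1}^\top A = \mathbf{d}^\top$, giving $\mathbf{1}^\top A\mathbf{d} = \mathbf{d}^\top\mathbf{d} = \sum_{i=1}^n d_i^2$.

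I do not anticipate any genuine obstacle: the statement is essentially a handshake-type double-counting identity. The only point requiring care is matching the two orderings of the double sum correctly, namely verifying that each $d_j$ is counted with multiplicity equal to $d_j$, which is immediate from the definition of degree. The hypothesis that $G$ has no isolated vertices is needed only to make each $m_i$ well-defined and plays no role in the counting itself.
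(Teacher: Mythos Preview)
Your proposal is correct and follows essentially the same route as the paper: expand $d_i m_i$ via the definition, sum over $i$, and swap the order of summation so that each $d_j$ is counted $d_j$ times. The adjacency-matrix reformulation you add is a pleasant extra but not a genuinely different argument.
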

\begin{proof}
It follows from
\begin{eqnarray}
\sum_{i=1}^nd_im_i&=&\sum_{i=1}^nd_i \left(d_i^{-1} \sum_{j:ji\in E(G)}d_j\right)
\nonumber   \\
&=&\sum_{j=1}^n\sum_{i:ij\in E(G)}d_j=\sum_{j=1}^nd_j^2.
\nonumber
\end{eqnarray}
\end{proof}

\medskip

Like a well-known property of degree sequence stating that the number of odd values $d_i$ is even, we have a similar version for the number of odd values $d_im_i$.

\begin{corollary}\label{cor_even}
There are even number of odd values $d_im_i$ for $1\leq i\leq n.$
\end{corollary}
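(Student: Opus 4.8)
The plan is to deduce the corollary directly from Lemma~\ref{lem_sum_dimi} by reading the identity $\sum_{i=1}^n d_i m_i = \sum_{i=1}^n d_i^2$ modulo $2$. The key observation is that the parity of a sum of integers depends only on how many summands are odd, so I would count odd terms on each side and compare.

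First I would note that each product $d_i m_i = \sum_{j:ji\in E(G)} d_j$ is a sum of integers, hence itself an integer, even though $m_i$ alone need not be an integer; this makes the statement ``odd values $d_i m_i$'' meaningful. Then, working modulo $2$, the left-hand side $\sum_{i=1}^n d_i m_i$ is congruent to the number of indices $i$ for which $d_i m_i$ is odd. On the right-hand side, since $d_i^2 \equiv d_i \pmod 2$ for every integer $d_i$, I have $\sum_{i=1}^n d_i^2 \equiv \sum_{i=1}^n d_i \pmod 2$, and the latter sum equals $2|E(G)|$, which is even. Therefore the left-hand side is even modulo $2$, which forces the number of odd values $d_i m_i$ to be even.

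I do not expect any real obstacle here, since the corollary is an immediate parity consequence of the lemma; the only point requiring a moment's care is justifying that $d_i m_i$ is an integer (so that its parity is well defined) and recalling the elementary congruence $d_i^2 \equiv d_i \pmod 2$ together with the handshake identity $\sum_i d_i = 2|E(G)|$. Packaging these three remarks and reading Lemma~\ref{lem_sum_dimi} mod~$2$ completes the argument in a few lines.
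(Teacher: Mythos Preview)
Your proposal is correct and follows essentially the same route as the paper: both arguments use Lemma~\ref{lem_sum_dimi}, observe that $d_i^2$ has the same parity as $d_i$, and invoke the handshake identity $\sum_i d_i = 2|E(G)|$ to conclude that $\sum_i d_i m_i$ is even. Your version adds the helpful remark that $d_i m_i = \sum_{j:ji\in E(G)} d_j$ is an integer, which the paper leaves implicit.
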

\begin{proof}
Since $\sum_{i=1}^nd_i=2|E(G)|$ is even, there are even number of odd $d_i,$ and so does $d_i^2.$ By Lemma~\ref{lem_sum_dimi}, $\sum_{i=1}^n d_im_i$ is even and we have the proof.
\end{proof}

\medskip

\begin{corollary}
$$\sum_{i=1}^nm_i^2 \geq \sum_{i=1}^nd_i^2$$
with equality if and only if $m_i=d_i$ for all $i.$
\end{corollary}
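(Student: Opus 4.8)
The plan is to recognize this corollary as an immediate consequence of Lemma~\ref{lem_sum_dimi} combined with the trivial nonnegativity of a sum of squares. Rather than attack the inequality $\sum m_i^2 \geq \sum d_i^2$ head on, I would study the manifestly nonnegative quantity $\sum_{i=1}^n (m_i - d_i)^2$ and show that, after invoking the lemma, it equals exactly the difference $\sum_{i=1}^n m_i^2 - \sum_{i=1}^n d_i^2$.

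Concretely, the first step is to expand the square, which gives
$$\sum_{i=1}^n (m_i-d_i)^2 = \sum_{i=1}^n m_i^2 - 2\sum_{i=1}^n d_i m_i + \sum_{i=1}^n d_i^2.$$
The key ingredient is Lemma~\ref{lem_sum_dimi}, which supplies the identity $\sum_{i=1}^n d_i m_i = \sum_{i=1}^n d_i^2$. Substituting this into the cross term, the last two summands combine as $-2\sum_{i=1}^n d_i^2 + \sum_{i=1}^n d_i^2 = -\sum_{i=1}^n d_i^2$, so the whole expression collapses to $\sum_{i=1}^n m_i^2 - \sum_{i=1}^n d_i^2$. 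Since the left-hand side is a sum of squares of real numbers, it is nonnegative, and the claimed inequality follows at once.

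The equality characterization then comes for free: a sum of real squares vanishes if and only if every summand vanishes, so equality holds precisely when $m_i - d_i = 0$ for all $i$, that is, when $m_i = d_i$ for every vertex. I do not anticipate a genuine obstacle here; the only real insight is the decision to package the problem as $\sum (m_i-d_i)^2$ so that the lemma's cross-term identity produces the exact cancellation, and everything else is routine algebra.
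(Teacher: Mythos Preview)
Your proof is correct, but it takes a different route from the paper's. The paper applies the Cauchy--Schwarz inequality
\[
\Big(\sum_{i=1}^n d_i^2\Big)\Big(\sum_{i=1}^n m_i^2\Big) \ge \Big(\sum_{i=1}^n d_i m_i\Big)^2
\]
and then invokes Lemma~\ref{lem_sum_dimi} to identify the right-hand side with $\big(\sum d_i^2\big)^2$; dividing by $\sum d_i^2$ gives the inequality. For the equality case the paper first gets $m_i = c\,d_i$ from the Cauchy--Schwarz equality condition and must then appeal to Lemma~\ref{lem_sum_dimi} a second time to pin down $c=1$. Your argument, by contrast, writes the difference $\sum m_i^2 - \sum d_i^2$ directly as $\sum (m_i-d_i)^2$ after a single use of the lemma in the cross term. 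This is more elementary---no Cauchy--Schwarz is needed---and the equality characterization $m_i=d_i$ falls out immediately without the extra step of determining the proportionality constant.
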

\begin{proof}
It immediately follows by Cauchy's inequality that
\begin{equation}    \label{eq_mi2_geq_di2}
\big(\sum_{i=1}^nd_i^2\big)\big(\sum_{i=1}^n m_i^2\big) \geq \big(\sum_{i=1}^n d_im_i\big)^2 = \big(\sum_{i=1}^n d_i^2\big)^2
\end{equation}
with equality if and only if $m_i=cd_i$ for all $1\leq i\leq n$ and some constant $c,$ where the last equality in~\eqref{eq_mi2_geq_di2} is from Lemma~\ref{lem_sum_dimi}. Again, by Lemma~\ref{lem_sum_dimi}, $c$ must be 1.
\end{proof}

\medskip

Degrees give hints of graph properties, e.g., $\sum_{i=1}^nd_i=2|E(G)|$ gives the number of edges.  Degree pairs provide more information about the graph structure.
\begin{proposition}
 If
$$\max_{1\leq i\leq n}d_im_i\geq n$$
then the girth of $G$ is at most $4.$
\end{proposition}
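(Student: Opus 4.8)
The plan is to reinterpret the quantity $d_im_i$ combinatorially and then run a pigeonhole argument on walks of length $2$. From the definition, $d_im_i=\sum_{j:ji\in E(G)}d_j$, which is exactly the number of walks $i\to j\to k$ of length $2$ starting at the vertex $i$. The hypothesis supplies a vertex $i$ with at least $n$ such walks, and I would fix this $i$ throughout. The strategy is to prove the contrapositive: assuming the girth of $G$ is at least $5$, so that $G$ contains neither a triangle nor a $4$-cycle, I will show that $i$ admits at most $n-1$ walks of length $2$, contradicting $d_im_i\ge n$.

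First I would classify the endpoints $k$ of these walks. The walks that return to $i$ are precisely the walks $i\to j\to i$, one for each neighbour $j$ of $i$, contributing exactly $d_i$ of them. For a walk ending at some $k\ne i$, the absence of triangles forces $k$ to be a non-neighbour of $i$: were $k$ adjacent to $i$, the intermediate vertex $j$ together with $i$ and $k$ would form a triangle. Hence every endpoint $k\ne i$ lies at distance exactly $2$ from $i$.

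Next I would bound the contribution of the distance-$2$ vertices. For a fixed $k$, the number of walks $i\to j\to k$ equals the number of common neighbours of $i$ and $k$. The absence of $4$-cycles means any two distinct vertices share at most one common neighbour, so each distance-$2$ vertex $k$ is the endpoint of at most one length-$2$ walk from $i$. Since at most $n-1-d_i$ vertices lie at distance $2$ (all vertices other than $i$ and its $d_i$ neighbours), the total is $d_im_i\le d_i+(n-1-d_i)=n-1$, the desired contradiction.

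The computation is short; the crux is the correct bookkeeping of the endpoints, separating the $d_i$ returning walks from the distance-$2$ endpoints and invoking the girth hypothesis twice --- once (no triangle) to ensure the endpoints $k\ne i$ are genuinely at distance $2$, and once (no $4$-cycle) to ensure each is hit at most once. The only subtlety is to confirm that a neighbour of $i$ can never serve as a length-$2$ endpoint and that no endpoint is double-counted, both guaranteed by the no-triangle condition.
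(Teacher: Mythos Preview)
Your proof is correct and is essentially the same argument as the paper's, carried out with more detail. The paper proves the contrapositive by noting that when the girth is at least $5$ one has $d_im_i=|G_1(i)|+|G_2(i)|\le n-1$; your walk-counting decomposition into the $d_i$ returning walks plus the (at most $n-1-d_i$) distance-$2$ endpoints is exactly the justification of that equality, invoking the no-triangle and no-$4$-cycle conditions in the same places.
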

\begin{proof}
For each $1\leq i\leq n,$ let $G_1(i)$ and $G_2(i)$ denote the set of vertices in $V(G)$ that have distance $1$ and $2$ to $i,$ respectively. If the graph has girth at least $5$ then
$$|V(G)|-1\geq |G_1(i)|+|G_2(i)|=d_im_i\qquad (1\leq i\leq n),$$
a contradiction.
\end{proof}

Reviewing the graphs given in Figure 2,  since both of them are of $\max_{1\leq i\leq 5}d_im_i\geq |V(G)|=5,$ there must exist some $3$-cycle $C_3$ or $4$-cycle $C_4$ in the graph by the above result. The uniqueness of the two graphs can be quickly confirmed.
\medskip

\subsection{An analogue of the Erd\H{o}s-Gallai Theorem}
The Erd\H{o}s-Gallai Theorem~\cite{60:eg} is a well-known solution to the graph realization problems which gives a necessary and sufficient condition for a sequence of nonnegative integers to be the degree sequence (i.e. \emph{graphic}) on a finite simple graph.
\begin{lemma}[Erd\H{o}s-Gallai Theorem]         \label{lem_EGThm}
A sequence of nonnegative integers $d_1\geq d_2\geq \cdots\geq d_n$ can be represented as the degree sequence of a finite simple graph on $n$ vertices if and only if $\sum_{i=1}^n d_i$ is even and
$$\sum_{i=1}^k d_i \leq k(k-1) + \sum_{i=k+1}^n \min\{d_i,k\}\qquad (1\leq k\leq n).$$
\qed
\end{lemma}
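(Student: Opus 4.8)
The statement is the classical Erd\H{o}s--Gallai theorem; I would prove the two implications separately, since the forward (necessity) direction is elementary double counting while the converse (sufficiency) carries the real weight.

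\emph{Necessity.} Suppose $d_1 \geq \cdots \geq d_n$ is realized by a simple graph $G$. The parity condition is just the handshake identity $\sum_i d_i = 2|E(G)|$. For the inequalities, fix $k$ and set $A = \{1, \dots, k\}$, the $k$ vertices of largest degree. I would split $\sum_{i=1}^k d_i$ according to whether an incident edge stays inside $A$ or leaves it: edges inside $A$ are counted twice and number at most $\binom{k}{2}$, contributing at most $k(k-1)$; each vertex $i \notin A$ absorbs at most $\min\{d_i,k\}$ of the crossing edges, since it has degree $d_i$ but only $k$ possible neighbours in $A$. Summing over $i > k$ gives the claimed bound.

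\emph{Sufficiency.} Here I would induct on the even total $\sum_i d_i$, the base case $\sum_i d_i = 0$ being the empty graph. For the inductive step I would peel off one edge: decrement two suitably chosen entries of the sequence by $1$, re-sort, invoke the inductive hypothesis to realize the smaller sequence by a graph $G'$, and then insert the missing edge to recover a realization of the original sequence.

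The main obstacle is twofold. First, one must verify that the decremented-and-resorted sequence still satisfies all $n$ Erd\H{o}s--Gallai inequalities; this is a delicate case analysis, because lowering two entries shifts several of the truncated tail sums $\sum_{i>k}\min\{d_i,k\}$ at once, and the binding index $k$ must be controlled precisely near the positions that were altered. Second, even after $G'$ is obtained, the two vertices whose degrees we wish to raise may already be adjacent in $G'$; resolving this requires a $2$-switch (edge-exchange) argument to free up a non-edge between them without disturbing the degree sequence. A cleaner packaging of the same idea is the Havel--Hakimi reduction --- realize the top vertex by joining it to the $d_1$ next-largest vertices and induct on $n$ --- but the heart of the matter, namely showing that the reduction preserves the Erd\H{o}s--Gallai inequalities, is unavoidable on any route.
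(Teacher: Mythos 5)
The paper does not actually prove this lemma: it is quoted as the classical Erd\H{o}s--Gallai theorem, stated with \verb|\qed| and a pointer to Choudum's short proof in the cited reference \cite{86:c}. So the relevant comparison is between your sketch and a complete proof of the theorem, and there your proposal has a genuine gap. The necessity half is fine --- the double count of $\sum_{i\le k} d_i$ into edges inside $A=\{1,\dots,k\}$ (at most $k(k-1)$ counted with multiplicity two) plus crossing edges absorbed by each $i>k$ at rate $\min\{d_i,k\}$ is correct and complete as written. But the sufficiency half is only a plan: you state that one should decrement two entries, re-sort, and invoke induction on $\sum_i d_i$, and then you explicitly defer the two steps that constitute the entire substance of that direction, namely (a) verifying that the reduced sequence still satisfies all $n$ inequalities and (b) handling the case where the two target vertices are already adjacent in the realization $G'$. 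Naming an obstacle is not the same as overcoming it; as it stands, nothing in the proposal establishes sufficiency.

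To close the gap you would need to commit to a specific reduction and carry out the verification. Choudum's argument (the one the paper cites) decrements $d_1$ together with $d_t$ for a carefully chosen index $t$ (essentially the last index attaining a particular value, so that the decremented sequence remains nonincreasing without re-sorting), and then checks the inequalities for the new sequence by splitting into the ranges $k<t$, $k\ge t$, and the subcases where the original inequality at index $k$ was tight or slack; the adjacency issue in step (b) is resolved by the standard $2$-switch you mention, but that too must be written out (find $x$ adjacent to $u$ but not $v$, and $y$ adjacent to $v$ but not $u$, with $xy\notin E$, and exchange). None of this is conceptually deep, but it is the whole proof, and your write-up acknowledges rather than supplies it.
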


\medskip

Lemma~\ref{lem_EGThm} was shown by several authors. To obtain a quick proof, one can see~\cite{86:c}. We give an analogue of Erd\H{o}s-Gallai Theorem for the degree pairs in the following which is, unfortunately, only a necessary condition. However, it is still helpful to determine a non-graphic degree pair sequence before we try to graph it. For real numbers $a_1,b_1,a_2,b_2,$ we define ordered pairs $(a_1,b_1)\succeq (a_2,b_2)$ if $a_1>a_2,$ or $a_1=a_2$ and $b_1\geq b_2.$ It is clear that $(\mathbb{R}^2,\succeq)$ is totally ordered.
\begin{proposition}     \label{prop_AnalogueEG}
If a sequence of ordered pairs of positive real numbers $(d_1,m_1)\succeq (d_2,m_2)\succeq \cdots\succeq (d_n,m_n)$ is a sequence of degree pairs  of a simple graph $G$ of order $n$, then
\begin{itemize}
\item[(i)]      $d_i$ and $d_im_i$ are both positive integers, for $i=1,2,\ldots,n,$
\item[(ii)]     $d_im_i\leq \sum_{j=1}^{d_i+1}d_j-d_{\min\{d_i+1,i\}},$ for $i=1,2,\ldots,n,$
\item[(iii)]    $d_im_i\geq \sum_{j=n-d_i}^{n}d_j-d_{\max\{n-d_i,i\}},$ for $i=1,2,\ldots,n,$
\item[(iv)]    $\sum_{i=1}^n d_im_i = \sum_{i=1}^n d_i^2,$
\item[(v)]     $\sum_{i=1}^n d_i$ is even (and so does $\sum_{i=1}^n d_im_i$),
\item[(vi)]     $\sum_{i=1}^k d_i \leq k(k-1) + \sum_{i=k+1}^n \min\{d_i,k\},$ for $k=1,2,\ldots,n,$ and
\item[(vii)]      $\sum_{i=1}^k d_im_i \leq \sum_{i=1}^k d_i\min\{d_i,k-1\} + \sum_{i=k+1}^n d_i\min\{d_i,k\},$ for $k=1,2,\ldots,n.$
\end{itemize}
\end{proposition}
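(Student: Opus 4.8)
The plan is to handle the seven items in three groups according to the tools they require. Items (iv), (v), and (vi) are essentially already available. Item (iv) is precisely Lemma~\ref{lem_sum_dimi}. The first half of (v) is the handshake identity $\sum_{i=1}^n d_i = 2|E(G)|$, and the parenthetical second half then follows because $\sum_{i=1}^n d_i^2$ has the same parity as $\sum_{i=1}^n d_i$ (as $d_i^2\equiv d_i \pmod 2$), hence is even, combined with (iv). Item (vi) is the Erd\H{o}s--Gallai inequality of Lemma~\ref{lem_EGThm} applied to the weakly decreasing degree sequence $d_1 \geq d_2 \geq \cdots \geq d_n$, which is exactly the ordering forced by $\succeq$. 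The integrality half of (i) is also immediate: $d_i$ is the degree of a vertex, and $d_i m_i = \sum_{j:\,ji\in E(G)} d_j$ is a finite sum of positive integers, hence a positive integer.

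The substance lies in (ii), (iii), and (vii). For (ii) and (iii) I would start from the identity $d_i m_i = \sum_{j \in N(i)} d_j$, where $N(i)$ denotes the set of $d_i$ neighbors of $i$; the crucial point is that $N(i)$ is a set of exactly $d_i$ vertices, all different from $i$. Hence $d_i m_i$ lies between the smallest and the largest value of $\sum_{j \in S} d_j$ as $S$ ranges over the $d_i$-subsets of $V(G) \setminus \{i\}$. Because the degrees are sorted, the maximizing $S$ consists of the $d_i$ largest degrees among the indices $\neq i$, and the minimizing $S$ of the $d_i$ smallest. It then remains to check that these two extremal sums are exactly the right-hand sides of (ii) and (iii). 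For (ii) this is a short case split: if $i \leq d_i$ then vertex $i$ sits inside the top $d_i+1$ indices, so deleting its degree $d_i$ from $\sum_{j=1}^{d_i+1} d_j$ leaves the top $d_i$ degrees avoiding $i$, matching $\min\{d_i+1,i\}=i$; if $i \geq d_i+1$ then vertex $i$ is outside the top $d_i$ indices and the bound is $\sum_{j=1}^{d_i} d_j = \sum_{j=1}^{d_i+1} d_j - d_{d_i+1}$, matching $\min\{d_i+1,i\}=d_i+1$. The argument for (iii) is the mirror image at the bottom of the list, with $\max\{n-d_i,i\}$ playing the role of $\min\{d_i+1,i\}$.

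For (vii) I would interchange the order of summation. Writing $A = \{1,2,\ldots,k\}$,
\[
\sum_{i=1}^k d_i m_i = \sum_{i \in A} \sum_{j \in N(i)} d_j = \sum_{j=1}^n d_j\,\bigl|N(j) \cap A\bigr|,
\]
since a fixed $j$ contributes $d_j$ once for each neighbor it has inside $A$. Now I bound $|N(j)\cap A|$: for $j \in A$ the neighbors of $j$ inside $A$ number at most $\min\{d_j,k-1\}$ (there are at most $d_j$ of them and at most $k-1$ vertices of $A$ other than $j$ itself), while for $j \notin A$ they number at most $\min\{d_j,k\}$. Summing these bounds and recalling $A = \{1,\ldots,k\}$ yields (vii) verbatim; the $k-1$ versus $k$ split, forced by the absence of loops, is the exact analogue of the $k(k-1)$ term in the Erd\H{o}s--Gallai bound (vi).

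I expect the only delicate part to be the index bookkeeping in (ii) and (iii): one must verify that the $\min$ and $\max$ in the subscripts correctly encode the instruction to delete vertex $i$'s own degree when, and only when, $i$ falls within the extremal block. Everything else is either a direct appeal to the quoted lemmas or the one-line double-counting identity above; in particular (vii) is conceptually the simplest once the summation is swapped.
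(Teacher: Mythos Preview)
Your proposal is correct and follows essentially the same approach as the paper's proof: (iv)--(vi) are referred to the quoted lemmas, (i) is immediate, (ii)--(iii) come from bounding $d_im_i=\sum_{j\in N(i)}d_j$ by the largest and smallest possible neighbor-degree sums, and (vii) is the double-counting identity $\sum_{i=1}^k d_im_i=\sum_{j=1}^n d_j\,|N(j)\cap\{1,\ldots,k\}|$ followed by the obvious bound on $|N(j)\cap\{1,\ldots,k\}|$. You supply more detail than the paper (notably the case split verifying the $\min$/$\max$ subscripts in (ii)--(iii) and the explicit $k-1$ versus $k$ bound in (vii)), but the underlying arguments are the same.
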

\begin{proof}
By definition (i) immediately follows. We have (ii) and (iii) by considering the possibly maximal and minimal sums of the degrees of vertices adjacent to each vertex, respectively. Also, (iv), (v), and (vi) are directly from Lemma~\ref{lem_sum_dimi}, Corollary~\ref{cor_even}, and Lemma~\ref{lem_EGThm}, respectively. Let $S=\{i~|~1\leq i\leq k\}.$ Then the left-hand side of (vii) can be written as
$$\sum_{i=1}^k d_im_i=\sum_{i=1}^n d_i|S\cap G_1(i)|,$$
which is not more than the summation forms in right-hand side.
\end{proof}

One should notice that the conditions listed in Proposition~\ref{prop_AnalogueEG} is not sufficient. For example, the sequence of pairs $(d_i,m_i)_{i=1}^7=(4,3),(3,3),(3,3),(3,3),(3,3),(1,3),(1,3)$ satisfies all of them. Nevertheless, one can work by hand to see that it is not a sequence of degree pairs of any graph of order $7$.
\medskip

\section{A lower bound for the independence number of $G^2$}\label{indep}

The \emph{independent number} $\alpha(G)$ of a graph $G$ is the maximum size of a vertex subset consisting of pairwise
nonadjacent vertices.  Let $G^2$ be the \emph{square} of $G,$ i.e.
$$V(G^2) = V(G)~~~\text{and}~~~E(G^2) = \{ij~\mid~d(i,j)=1~\text{or}~2\},$$
where $d(i,j)$ denotes the distance of vertices $i$ and $j$ in the graph $G.$ The independent number $\alpha(G^2)$ of $G^2$ applies to solve data aggregation problem and collision avoidance problem in a wireless sensor network $G$ \cite{km:08}. Using the probabilistic method~\cite[Theorem~18.4]{11:j}, we have the following result.
\begin{proposition}
Let $G$ be a simple graph with no isolated vertices and of degree pair sequence $(d_i,m_i)_{i=1}^n.$ Then the independence number of the square $G^2$ of $G$ satisfies
$$\alpha(G^2)\geq \sum_{i=1}^n\frac{1}{1+d_im_i}.$$
\end{proposition}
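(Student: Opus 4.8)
The plan is to deduce the bound from the classical Caro--Wei (Tur\'an-type) lower bound applied to the graph $G^2$, and then to translate the $G^2$-degree of each vertex into the quantity $d_im_i$ coming from the original graph $G$. Writing $\deg_{G^2}(i)$ for the degree of vertex $i$ in $G^2$, the target inequality will follow once I show
\[
\alpha(G^2)\ \geq\ \sum_{i=1}^n\frac{1}{1+\deg_{G^2}(i)}\qquad\text{and}\qquad \deg_{G^2}(i)\leq d_im_i\ \ (1\leq i\leq n),
\]
since $t\mapsto 1/(1+t)$ is decreasing.

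First I would establish the first displayed inequality, which is exactly the probabilistic-method result cited as \cite[Theorem~18.4]{11:j}. Concretely, pick a uniformly random linear ordering $\pi$ of $V(G^2)=V(G)$ and let $I$ be the set of vertices that precede all of their $G^2$-neighbours under $\pi$. The set $I$ is independent in $G^2$, because among any two $G^2$-adjacent vertices exactly one comes first. Moreover a vertex $i$ lies in $I$ iff it is the $\pi$-minimum of the set $\{i\}\cup N_{G^2}(i)$ of size $1+\deg_{G^2}(i)$, an event of probability $1/(1+\deg_{G^2}(i))$. By linearity of expectation $\mathbb{E}|I|=\sum_{i=1}^n 1/(1+\deg_{G^2}(i))$, so some independent set attains at least this size, giving the bound.

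The substantive step is the second inequality $\deg_{G^2}(i)\leq d_im_i$. Using the sets $G_1(i)$ and $G_2(i)$ of vertices at distance $1$ and $2$ from $i$ (as in the proof of the girth proposition), the $G^2$-degree is $\deg_{G^2}(i)=|G_1(i)|+|G_2(i)|=d_i+|G_2(i)|$. On the other hand, by the definition of $m_i$ we have $d_im_i=\sum_{j:ji\in E(G)}d_j$, which counts the ordered pairs $(j,k)$ with $j\sim i$ and $k\sim j$. I would partition this count by the vertex $k$: the choice $k=i$ occurs for each of the $d_i$ neighbours $j$ of $i$ and contributes $d_i$; each $k\in G_2(i)$ is reached from at least one neighbour $j$ of $i$ and so contributes at least $1$; and the remaining terms, with $k\in G_1(i)$, are nonnegative. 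Hence $d_im_i\geq d_i+|G_2(i)|=\deg_{G^2}(i)$, as required.

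Combining the two inequalities through the monotonicity of $1/(1+t)$ yields $\alpha(G^2)\geq\sum_{i=1}^n 1/(1+d_im_i)$. The only point demanding care is the counting argument in the last step: the quantity $d_im_i$ is a walk count that over-counts the vertices of $G_2(i)$ via multiple length-two paths and also absorbs the back-edges to $i$, so the crux is to verify that, even after discarding these over-counts, enough remains to dominate $d_i+|G_2(i)|$. Everything else is a direct appeal to the cited theorem and to the definition of the average $2$-degree.
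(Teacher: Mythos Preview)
Your proof is correct and follows essentially the same route as the paper: apply the Caro--Wei/random-ordering bound to $G^2$ and then use $\deg_{G^2}(i)=|G_1(i)|+|G_2(i)|\le d_im_i$. In fact you are more explicit than the paper, which simply asserts $p_i=(1+|G_1(i)|+|G_2(i)|)^{-1}\ge(1+d_im_i)^{-1}$ without spelling out the walk-counting argument you give.
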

\begin{proof}
A subset $S$ of $\{1, 2, \ldots, n\}$ such that  $i\in S$ implies $G_1(i) \subseteq \{i+1, i+2, \ldots, n\}$ is an independent set. We use the probabilistic method to expected size of $S$. If every vertex is picked equally in random then the probability of a vertex $i$ appears before those vertices in $G_1(i)\cup G_2(i)$ is $p_i=(1+|G_1(i)|+|G_2(i)|)^{-1}$, which is at least $(1+d_im_i)^{-1}$. Moreover, the expected size of a set consisting of these $i$ is $\sum_{i=1}^n p_i$ because of the linearity of expectation. Therefore,
$$\alpha(G^2)\geq \sum_{i=1}^n p_i\geq \sum_{i=1}^n\frac{1}{1+d_im_i}.$$
\end{proof}

\bigskip

\section{Pseudo regular graphs}     \label{sec_pse_kreg}
This section turns to the study of pseudo regular graphs. We start from the properties of harmonic graphs. Recall that a simple graph $G$ with no isolated vertices is $k$-harmonic if its average $2$-degree $m_i=k$ for every $i\in V(G),$ which was introduced in~\cite{03:dgu}. From the definition of a $k$-harmonic graph, $k$ is a rational number, but indeed $k$ is an integer.
\begin{proposition}(\cite{03:dgu})
If $G$ is $k$-harmonic then $k$ is a positive integer.
\end{proposition}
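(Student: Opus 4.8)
The plan is to recognize the harmonic condition as an eigenvalue equation and then invoke the fact that a rational algebraic integer is an ordinary integer. First I would rewrite the hypothesis $m_i=k$ using the definition $m_i=d_i^{-1}\sum_{j:ji\in E(G)}d_j$: clearing the denominator gives $\sum_{j:ji\in E(G)}d_j=kd_i$ for every vertex $i$. Writing $A$ for the adjacency matrix of $G$ and $\mathbf{d}=(d_1,\ldots,d_n)^{T}$ for the column vector of degrees, these $n$ scalar equations are exactly the single matrix identity $A\mathbf{d}=k\mathbf{d}$, since the $i$-th entry of $A\mathbf{d}$ is $\sum_{j:ji\in E(G)}d_j$. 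Because $G$ has no isolated vertices we have $d_i\geq 1$ for all $i$, so $\mathbf{d}\neq\mathbf{0}$ and $k$ is genuinely an eigenvalue of $A$ with eigenvector $\mathbf{d}$.

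Next I would exploit the integrality of $A$. The adjacency matrix is a symmetric $0/1$ matrix, so its characteristic polynomial $\det(xI-A)$ is monic with integer coefficients; consequently every eigenvalue of $A$, and in particular $k$, is an algebraic integer. On the other hand, $k=m_i$ is by its very definition a quotient of nonnegative integers, hence rational. A rational number that is an algebraic integer must be an integer: writing $k=p/q$ in lowest terms and applying the rational root theorem to the monic integer polynomial $\det(xI-A)$ forces $q=1$. Therefore $k\in\mathbb{Z}$.

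Finally, positivity is immediate. Each $d_i\geq 1$, and each vertex has at least one neighbor, whose degree is also at least $1$, so $kd_i=\sum_{j:ji\in E(G)}d_j\geq 1>0$, whence $k>0$. Combined with $k\in\mathbb{Z}$, this shows $k$ is a positive integer.

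I do not anticipate a serious obstacle: the essential content is the observation that the degree vector is an eigenvector of the adjacency matrix, after which the standard rational-root argument closes the gap between rationality and integrality. The only point needing a little care is verifying $\mathbf{d}\neq\mathbf{0}$ so that $k$ actually occurs as an eigenvalue, and this is precisely where the hypothesis of no isolated vertices is used.
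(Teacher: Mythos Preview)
Your proof is correct and follows essentially the same route as the paper: recognize $A\mathbf{d}=k\mathbf{d}$, conclude that $k$ is an algebraic integer as a root of the monic integer characteristic polynomial, and combine with the obvious rationality of $k$ to get integrality. You have simply spelled out more of the details (nonvanishing of $\mathbf{d}$, the rational root argument, and positivity) than the paper does.
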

\begin{proof}
Let $A$ be the adjacency matrix of $G.$ Then we have $Ad=kd$,
where $d$ is the column vector $(d_1,d_2,\ldots,d_n)^T$.
Being a zero of the characteristic polynomial of $A,$ $k$ is an algebraic integer. Since $k$ is also a positive rational number, $k$ is a positive integer.
\end{proof}

\medskip

It is natural to ask when a $k$-harmonic graph with the given order $n$ attains the maximum number of edges.
\begin{proposition}
A $k$-harmonic graph on $n$ vertices has at most $nk/2$ edges, and the maximum is obtained if and only if the graph is regular.
\end{proposition}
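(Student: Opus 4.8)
The plan is to translate the edge-count bound into a statement about the degree sequence, and then invoke the identity from Lemma~\ref{lem_sum_dimi} together with the Cauchy--Schwarz inequality. First I would write the number of edges as $|E(G)|=\frac{1}{2}\sum_{i=1}^n d_i$, so that the claimed bound $|E(G)|\leq nk/2$ is equivalent to $\sum_{i=1}^n d_i\leq nk$.

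The key step uses the $k$-harmonic hypothesis $m_i=k$ for every vertex $i$. Substituting this into Lemma~\ref{lem_sum_dimi} gives
$$k\sum_{i=1}^n d_i=\sum_{i=1}^n d_im_i=\sum_{i=1}^n d_i^2.$$
Next I would apply the Cauchy--Schwarz inequality (equivalently, the quadratic-mean/arithmetic-mean inequality) in the form $\sum_{i=1}^n d_i^2\geq \frac{1}{n}\big(\sum_{i=1}^n d_i\big)^2$. Combining the two displays yields $k\sum_{i=1}^n d_i\geq \frac{1}{n}\big(\sum_{i=1}^n d_i\big)^2$, and since $\sum_{i=1}^n d_i>0$ (as $G$ has no isolated vertices), dividing by $\sum_{i=1}^n d_i$ gives $\sum_{i=1}^n d_i\leq nk$, that is, $|E(G)|\leq nk/2$.

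For the equality statement I would analyze when the Cauchy--Schwarz step is tight: equality holds precisely when all the $d_i$ are equal, i.e.\ when $G$ is regular. Conversely, if $G$ is regular and $k$-harmonic, then its common degree $d$ satisfies $d=m_i=k$, so $G$ is $k$-regular and has exactly $nk/2$ edges. No step here is a genuine obstacle; the only point requiring slight care is the equality analysis, where one must observe that ``regular'' in the presence of the harmonic condition forces the common degree to equal $k$, thereby closing the loop between the two directions.
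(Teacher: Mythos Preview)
Your proof is correct and follows essentially the same approach as the paper: both use the identity $\sum d_i m_i = \sum d_i^2$ from Lemma~\ref{lem_sum_dimi}, substitute $m_i=k$, apply the Cauchy--Schwarz inequality $\sum d_i^2 \geq \frac{1}{n}\big(\sum d_i\big)^2$, and conclude $|E(G)|\leq nk/2$ with equality iff all $d_i$ are equal. Your treatment of the equality case is slightly more explicit than the paper's, but the underlying argument is the same.
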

\begin{proof}  Let $G$ be a $k$-harmonic graph with degree pairs $\{(d_i, m_i)\}_{i=1}^n$, where $m_i=k$.
By Cauchy's inequality,
$$2k|E(G)|=\sum_{i=1}^nd_im_i = \sum_{i=1}^nd_i^2 \geq \frac{(\sum_{i=1}^nd_i)^2}{n} = \frac{4|E(G)|^2}{n},$$
we have $|E(G)|\leq nk/2$ and the equality is obtained if and only if $d_i$ is a constant.
\end{proof}

\medskip

The next is to ask when a connected $k$-harmonic graph of order $n$ attains the minimal number of edges. This was done in
\cite{03:dgu} with the following trees.
\begin{definition}      \label{defn_Tk}
For each $k\geq 2,$ let $T_k$ be the tree of order $k^3-k^2+k+1$ whose root has degree $k^2-k+1$ and each neighbor of the root has $k-1$ children as leafs.
\end{definition}
\begin{center}
%\begin{multicols}{2}
\begin{picture}(120,100)
\put(10,10){\circle*{3}}    \put(10,50){\circle*{3}}
\put(60,10){\circle*{3}}    \put(60,50){\circle*{3}}    \put(60,90){\circle*{3}}
\put(110,10){\circle*{3}}   \put(110,50){\circle*{3}}
\qbezier(10,10)(10,30)(10,50)       \qbezier(10,50)(35,70)(60,90)
\qbezier(60,10)(60,30)(60,50)       \qbezier(60,50)(60,70)(60,90)
\qbezier(110,10)(110,30)(110,50)    \qbezier(110,50)(85,70)(60,90)
\end{picture}
~~~~~~~~~~~~
\begin{picture}(200,100)
\put(0,10){\circle*{3}}     \put(20,10){\circle*{3}}
\put(30,10){\circle*{3}}    \put(50,10){\circle*{3}}
\put(60,10){\circle*{3}}    \put(80,10){\circle*{3}}
\put(90,10){\circle*{3}}    \put(110,10){\circle*{3}}
\put(120,10){\circle*{3}}   \put(140,10){\circle*{3}}
\put(150,10){\circle*{3}}   \put(170,10){\circle*{3}}
\put(180,10){\circle*{3}}   \put(200,10){\circle*{3}}
\put(10,50){\circle*{3}}    \put(40,50){\circle*{3}}    \put(70,50){\circle*{3}}    \put(100,50){\circle*{3}}
\put(130,50){\circle*{3}}   \put(160,50){\circle*{3}}   \put(190,50){\circle*{3}}   \put(100,90){\circle*{3}}
\qbezier(0,10)(5,30)(10,50)         \qbezier(20,10)(15,30)(10,50)
\qbezier(30,10)(35,30)(40,50)       \qbezier(50,10)(45,30)(40,50)
\qbezier(60,10)(65,30)(70,50)       \qbezier(80,10)(75,30)(70,50)
\qbezier(90,10)(95,30)(100,50)      \qbezier(110,10)(105,30)(100,50)
\qbezier(120,10)(125,30)(130,50)    \qbezier(140,10)(135,30)(130,50)
\qbezier(150,10)(155,30)(160,50)    \qbezier(170,10)(165,30)(160,50)
\qbezier(180,10)(185,30)(190,50)    \qbezier(200,10)(195,30)(190,50)
\qbezier(10,50)(55,70)(100,90)     \qbezier(40,50)(70,70)(100,90)
\qbezier(70,50)(85,70)(100,90)     \qbezier(100,50)(100,70)(100,90)
\qbezier(130,50)(115,70)(100,90)   \qbezier(160,50)(130,70)(100,90)
\qbezier(190,50)(145,70)(100,90)
\end{picture}
%\end{multicols}

\bigskip
{\bf Figure 3:} The trees $T_2$ and $T_3$.
\end{center}
\medskip

\begin{proposition}\label{dgu}(\cite{03:dgu}) The $k$-harmonic trees are the trees $T_k$.
\end{proposition}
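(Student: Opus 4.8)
The plan is to prove both inclusions of the characterization. The easy direction, that $T_k$ is $k$-harmonic, is a direct check: $T_k$ has three kinds of vertices --- the root $r$ with $d_r=k^2-k+1$, its $k^2-k+1$ neighbours each of degree $k$, and the leaves of degree $1$ --- and one verifies $m_i=k$ for each kind (for a neighbour $v$ of the root, $m_v=\tfrac1k[(k^2-k+1)+(k-1)\cdot 1]=k$, and the other two cases are immediate). So I concentrate on the converse: every $k$-harmonic tree $T$ equals $T_k$. Throughout I use that the harmonic condition $m_i=k$ is the same as $\sum_{j\sim i}d_j=k\,d_i$, and that $k$ is a positive integer by the preceding proposition; since the trees $T_k$ are defined for $k\ge 2$ (the case $k=1$ degenerates to $K_2$), I assume $k\ge 2$.

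The first observation is that evaluating $\sum_{j\sim i}d_j=k\,d_i$ at a leaf $\ell$ gives $d_v=k$ for the unique neighbour $v$ of $\ell$; hence every vertex adjacent to a leaf has degree exactly $k$. I then fix a longest path $v_0v_1\cdots v_L$ in $T$, whose endpoints are leaves, and read off degrees from the ends. For $k\ge 2$ the small diameters are excluded: the harmonic equation at the centre of a star forces $k=1$, and for a double star the equation at a centre reads $2k-1=k^2$, i.e.\ $(k-1)^2=0$, again forcing $k=1$; so $L\ge 4$. Maximality of the path makes every neighbour of $v_1$ other than $v_2$ a leaf, so $v_1$ (of degree $k$) carries $k-1$ leaves, and the harmonic equation at $v_1$ then yields $d_{v_2}=k^2-k+1$.

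The crux is to show $L=4$, i.e.\ that $v_2$ is a centre of radius $2$. Using maximality of the path measured from the far endpoint $v_L$, every neighbour $u\neq v_3$ of $v_2$ has all of its remaining neighbours leaves; such a $u$ cannot itself be a leaf, since a leaf adjacent to $v_2$ would force $d_{v_2}=k$ and hence $(k-1)^2=0$, so by the first observation $d_u=k$ and $u$ carries $k-1$ leaves. Substituting these $k^2-k$ degree-$k$ neighbours into the harmonic equation at $v_2$ pins down $d_{v_3}=k$; feeding $d_{v_3}=k$ and $d_{v_2}=k^2-k+1$ into the harmonic equation at $v_3$ leaves $k-1$ neighbour-degrees summing to $k-1$, each therefore equal to $1$, so in particular $v_4$ is a leaf. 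If $L\ge 5$ this contradicts $v_4$ having the further path-neighbour $v_5$; hence $L=4$. Consequently $v_2$ has degree $k^2-k+1$, each of its neighbours is a degree-$k$ support vertex with $k-1$ pendant leaves, and counting gives $1+(k^2-k+1)+(k^2-k+1)(k-1)=k^3-k^2+k+1$ vertices --- exactly $T_k$.

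The main obstacle is precisely this diameter bound, namely the bookkeeping that forces $L=4$. The delicate points are (a) justifying that path-maximality bounds the depth of every branch at $v_2$ except the one continuing the path, so that all but one neighbour of $v_2$ are support vertices; (b) ruling out that the centre is adjacent to a leaf, which is exactly where $(k-1)^2=0$ discards $k\ge 2$; and (c) the final degree count at $v_3$ that produces a forbidden extra leaf when $L\ge 5$. I would also double-check the boundary between $L\le 3$ and $L=4$ so that the star/double-star exclusions and the generic argument together cover all trees.
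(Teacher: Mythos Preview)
Your argument is correct. Both you and the paper open with a longest path and use it to locate a degree-$k$ vertex adjacent to $k-1$ leaves and to one vertex of degree $k^2-k+1$ (your $v_2$). The paper, however, does not push the longest-path argument further; instead it first proves the general inequality $d_i\le m_i(m_j-1)+1$ for any edge $ij$ with $d_j\le m_i$ (Lemma~\ref{lem_djmimj}) and then invokes its equality case to conclude that \emph{every} neighbour of the degree-$(k^2-k+1)$ vertex is a degree-$k$ support vertex. You reach the same conclusion by a second application of path-maximality, this time measured from the far endpoint $v_L$, followed by two more harmonic equations (at $v_2$ and at $v_3$) to force $L=4$. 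Your route is tree-specific and self-contained; the paper's route via Lemma~\ref{lem_djmimj} is packaged inside Theorem~\ref{thm_Tk} and buys the stronger statement $\Delta(G)\le k^2-k+1$ for any connected graph with all $m_i\le k$, not just harmonic trees.
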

\medskip

We now turn to the study of  connected pseudo $k$-regular graphs.
\begin{lemma}\label{t2}
If $G$ is connected pseudo $2$-regular then $G$ is the tree $T_2.$
\end{lemma}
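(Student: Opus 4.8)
The plan is to combine two facts already at hand: the classification of $k$-harmonic trees (Proposition~\ref{dgu}), which tells us that a $2$-harmonic \emph{tree} must be $T_2$, and the edge-count proposition, which forces any $2$-harmonic graph that is \emph{not} a tree to be regular. So the real work is to show that a connected pseudo $2$-regular graph cannot contain a cycle. First I would record what $T_2$ actually is: by Definition~\ref{defn_Tk} with $k=2$, it is the tree on $2^3-2^2+2+1=7$ vertices whose root has degree $2^2-2+1=3$ and each of whose three neighbors has $k-1=1$ leaf child; this is precisely the spider drawn on the left of Figure~3, and one checks directly that it is $2$-harmonic but not regular.

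The core of the argument is the claim that a connected $2$-harmonic graph $G$ with a cycle must be $2$-regular, hence not pseudo $2$-regular. The clean way to get this is via the edge-count proposition: a connected graph on $n$ vertices has $|E(G)|\ge n-1$, with equality exactly for trees. For a $2$-harmonic graph the edge-count proposition gives $|E(G)|\le 2n/2=n$, with equality if and only if $G$ is regular. Thus a connected $2$-harmonic graph has $n-1\le |E(G)|\le n$. If $|E(G)|=n-1$ then $G$ is a tree, so by Proposition~\ref{dgu} it is $T_2$, which is the desired pseudo $2$-regular conclusion. The only other possibility is $|E(G)|=n$, and then equality in the edge-count proposition forces $G$ to be regular, i.e. $2$-regular, contradicting that $G$ is pseudo (non-regular). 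Hence $|E(G)|=n-1$ and $G=T_2$.

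I would present the proof in exactly that order: invoke $n-1\le |E(G)|$ from connectivity and $|E(G)|\le n$ from the $2$-harmonic edge bound, split into the two integer cases $|E(G)|=n-1$ and $|E(G)|=n$, dispatch the first via the tree classification and the second via the equality condition for regularity, and conclude. The main obstacle — really the only delicate point — is ensuring the equality case of the edge bound is used correctly: the proposition states equality holds \emph{iff} $d_i$ is constant, so a pseudo $2$-regular (non-regular) graph cannot attain $|E(G)|=n$, which is exactly the leverage that rules out the cyclic case. Everything else is a short sandwiching of $|E(G)|$ between two consecutive integers together with the two previously established propositions, so no genuinely new calculation is required.
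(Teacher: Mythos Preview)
Your proof is correct, but it follows a different route from the paper's. The paper argues directly by degree-chasing: first it shows $\Delta(G)\le 3$ (a vertex of degree $\ge 4$ would force all its neighbors to have degree $\ge 3$, making its own average $2$-degree exceed $2$), and then, fixing a vertex $i$ with $d_i=3$, it rules out $d_j=1$ and $d_j=3$ for each neighbor $j$, leaving $d_j=2$; from this local picture $T_2$ is read off. Your argument is more global: you sandwich $|E(G)|$ between $n-1$ (connectivity) and $n$ (the $k=2$ case of the edge-count proposition), then dispatch the tree case via Proposition~\ref{dgu} and eliminate the $|E(G)|=n$ case by the equality clause of the edge bound. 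Your approach is shorter and shows how the lemma falls out of the two propositions already in hand; the paper's approach is self-contained (it does not invoke Proposition~\ref{dgu}) and gives a bit more structural feel for why only $T_2$ survives.
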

\begin{proof}
If there exists a vertex $i\in V(G)$ of degree $d_i\geq 4,$ then each vertex $j$ adjacent to $i$ has degree $j>2,$ or otherwise the average $2$-degree $m_j$ of $j$ is more than $2.$ However, a contradiction occurs since the average $2$-degree $m_i$ of $i$ is more than $2.$ Hence the maximum degree of $G$ is $3.$ Suppose $i\in V(G)$ of degree $d_i=3,$ then each vertex $j$ adjacent to $i$ is of degree $j\neq 1$ (or otherwise $j$ has average $2$-degree $j>2$).
Also, $d_j\neq 3$ since $m_i=2.$ Hence $d_j=2.$ It implies that $G$ is exactly $T_2.$
\end{proof}

\medskip

We want to find the maximum degree of a connected pseudo $k$-regular graph of order $n$.
It turns out that $T_k$ gives the maximum degree. We need a lemma in more general setting.

\begin{lemma}   \label{lem_djmimj}
 Let $G$ be a graph with degree pairs $\{(d_i, m_i)\}_{i=1}^n$.
Then for any $ij\in E(G)$ with $d_j\leq m_i,$ we have
$$d_i\leq m_i(m_j-1)+1$$
with equality if and only if $d_j=m_i$ and all neighbors of $j$ except for $i$ have degree $1.$
\end{lemma}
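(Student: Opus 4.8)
The plan is to convert the average $2$-degrees into sums of neighbor degrees and then exploit the edge $ij$ together with the no-isolated-vertices hypothesis. The key observation is that $d_jm_j=\sum_{\ell:\ell j\in E(G)}d_\ell$ is exactly the sum of the degrees of the neighbors of $j$. Since $ij\in E(G)$, the vertex $i$ is one of those neighbors, so I would split off its contribution and write $d_jm_j=d_i+\sum_{\ell:\ell j\in E(G),\,\ell\neq i}d_\ell$. This isolates $d_i$ on one side and sets up the whole argument.

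First I would bound the remaining sum from below. There are exactly $d_j-1$ neighbors of $j$ other than $i$, and each has degree at least $1$ because $G$ has no isolated vertices, so $\sum_{\ell\neq i}d_\ell\geq d_j-1$. Rearranging yields $d_i\leq d_jm_j-(d_j-1)=d_j(m_j-1)+1$, with equality precisely when every neighbor of $j$ other than $i$ has degree exactly $1$. Next I would pass from $d_j$ to $m_i$ using the two hypotheses: every degree is at least $1$, so $m_j\geq 1$ and the factor $m_j-1$ is nonnegative, whence $d_j\leq m_i$ gives $d_j(m_j-1)+1\leq m_i(m_j-1)+1$. Chaining the two inequalities establishes $d_i\leq m_i(m_j-1)+1$.

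The only delicate point, and the step I expect to require the most care, is the equality analysis. Equality in the chain forces equality in both inequalities, i.e.\ all neighbors of $j$ except $i$ have degree $1$, and, for the second inequality, either $d_j=m_i$ or the inequality is degenerate because $m_j=1$. The potential pitfall is this degenerate case $m_j=1$, which could a priori produce a spurious equality outside the claimed characterization. I would rule it out directly: $m_j=1$ means all neighbors of $j$ (including $i$) have degree $1$, so $d_i=1$; then $j$ is the unique neighbor of $i$, forcing $m_i=d_j$, so $d_j=m_i$ holds automatically. Hence in every equality case both stated conditions, namely $d_j=m_i$ and ``all neighbors of $j$ except $i$ have degree $1$,'' are satisfied, and conversely these two conditions clearly give equality throughout. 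This matches the claim exactly and completes the characterization.
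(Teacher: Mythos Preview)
Your proof is correct and follows essentially the same route as the paper: write $d_jm_j\geq d_i+(d_j-1)$, rearrange to $d_i\leq d_j(m_j-1)+1$, and then use $d_j\leq m_i$ together with $m_j-1\geq 0$. Your equality analysis is in fact more careful than the paper's, which tacitly asserts that the second inequality is an equality only when $d_j=m_i$; you correctly isolate and dispose of the degenerate possibility $m_j=1$.
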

\begin{proof}
By definition,
$$d_jm_j=\sum_{k:kj\in E(G)}d_k \geq d_i+1\cdot(d_j-1)$$
Therefore,
$$d_i\leq d_j(m_j-1)+1 \leq m_i(m_j-1)+1$$
with the first equality if and only if all neighbors of $j$ except for $i$ have degree $1,$ and $d_j=m_i$ for the latter equality.
\end{proof}

\medskip

The following theorem includes Proposition~\ref{dgu} as a special case.

\begin{theorem}     \label{thm_Tk}
Let $k\geq 2$ be a positive integer and $G$ be a connected simple graph with average $2$-degree $m_i\leq k$ for each vertex $i\in V(G).$ Then the maximum degree
$$\Delta(G)\leq k^2-k+1.$$
Moreover, the following (i)-(iv) are equivalent.
\begin{itemize}
\item[(i)]      $\Delta(G)=k^2-k+1.$
\item[(ii)]     $G$ is the tree $T_k.$
\item[(iii)]    $G$ is a pseudo $k$-regular tree.
\item[(iv)]     $G$ has a vertex $j$ of degree and average $2$-degree $d_j=m_j=k$ and all neighbors of $j$ have degree $1$ with exactly one exception.
\end{itemize}
\end{theorem}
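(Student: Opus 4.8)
My plan is to establish the degree bound first and then prove the four equivalences by going around the cycle $(i)\Rightarrow(iv)\Rightarrow(ii)\Rightarrow(iii)\Rightarrow(iv)$, with the byproduct $(iv)\Rightarrow(i)$ closing the loop; Lemma~\ref{lem_djmimj} carries both the bound and its equality case. For the bound, let $i$ be a vertex of maximum degree and let $j$ be a neighbour of $i$ of smallest degree. Since $m_i$ is the mean of the degrees of the neighbours of $i$, the least of them satisfies $d_j\le m_i$, so Lemma~\ref{lem_djmimj} applies and yields $d_i\le m_i(m_j-1)+1$. As $1\le m_j\le k$ and $m_i\le k$, the right-hand side is at most $k(k-1)+1=k^2-k+1$, giving $\Delta(G)\le k^2-k+1$.

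For $(i)\Rightarrow(iv)$ I would rerun this chain at equality. If $d_i=\Delta(G)=k^2-k+1$, then $k^2-k+1\le m_i(m_j-1)+1\le k(k-1)+1$ forces $m_i(m_j-1)=k(k-1)$; since $m_i\le k$ and $0\le m_j-1\le k-1$, both factors must be maximal, so $m_i=m_j=k$. The equality clause of Lemma~\ref{lem_djmimj} then gives $d_j=m_i=k$ and that every neighbour of $j$ except $i$ is a leaf, which is precisely $(iv)$ (with $j$ the distinguished vertex and $i$ the exceptional neighbour). The two short directions are routine. For $(ii)\Rightarrow(iii)$ I would read off from Definition~\ref{defn_Tk} that in $T_k$ the root, the middle layer and the leaves have average $2$-degree $\tfrac{(k^2-k+1)k}{k^2-k+1}=k$, $\tfrac{(k^2-k+1)+(k-1)}{k}=k$ and $\tfrac{k}{1}=k$ respectively, while the three degrees $k^2-k+1,k,1$ are distinct for $k\ge2$, so $T_k$ is $k$-harmonic but not regular. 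For $(iii)\Rightarrow(iv)$, take a longest path $v_0v_1\cdots v_\ell$ in the tree $G$ (with $\ell\ge3$, since a pseudo $k$-regular tree with $k\ge2$ is neither $K_2$ nor a star); maximality of the path forces every neighbour of $v_1$ other than $v_2$ to be a leaf, and since the leaf $v_0$ is adjacent to $v_1$ the harmonic condition gives $d_{v_1}=m_{v_0}=k$, so $j=v_1$ witnesses $(iv)$.

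The crux is $(iv)\Rightarrow(ii)$, which also delivers $(iv)\Rightarrow(i)$, and this is where I expect the real work. Starting from the distinguished vertex $j$ with $d_j=m_j=k$ whose neighbours are $k-1$ leaves and one vertex $i$, averaging the neighbour degrees of $j$ pins down $d_i=k^2-k+1$. The heart of the argument is then a pair of interlocking degree counts using only $m\le k$. First, for each neighbour $\ell$ of $i$ the bound $m_\ell\le k$, together with the single large contribution $d_i-1=k^2-k$ coming from $i$, forces $d_\ell\ge k$; summing over the $k^2-k+1$ neighbours of $i$ and comparing with $d_im_i\le k\,d_i$ squeezes every such $d_\ell$ to equal $k$ exactly (and $m_i=k$). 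Second, for each such $\ell$ the relation $m_\ell\le k$ leaves the $k-1$ neighbours of $\ell$ other than $i$ with degree sum at most $k^2-(k^2-k+1)=k-1$, so all of them are leaves. Since a degree-one vertex attaches to a unique neighbour, these leaves are distinct and disjoint from the $\ell$'s, the vertex count is exactly $1+(k^2-k+1)+(k^2-k+1)(k-1)=k^3-k^2+k+1$, and connectivity forbids any further vertex or edge; hence $G$ is precisely $T_k$. The main obstacle is making this squeezing airtight --- checking that the two averaging inequalities leave no slack and that no stray edge (for instance among the neighbours of $i$) can survive the degree-one conclusion --- and, as a bonus, this self-contained chain reproves Proposition~\ref{dgu}.
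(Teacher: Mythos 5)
Your proposal is correct and follows essentially the same route as the paper: the bound and its equality case both come from Lemma~\ref{lem_djmimj} applied to a minimum-degree neighbour of a maximum-degree vertex, the implication (iii)$\Rightarrow$(iv) uses the same longest-path argument, and (iv)$\Rightarrow$(i) uses the same computation $d_jm_j=k^2$. The only real difference is organizational: where the paper compresses the reconstruction of $T_k$ into the terse claim that the equality condition makes (i) and (ii) equivalent, you route the argument through (iv) and carry out the two-stage degree-squeezing explicitly, which is a welcome filling-in of detail rather than a new method.
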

\begin{proof}
For every vertex $i\in V(G),$ it is obvious that there exists $ji\in E(G)$ such that $d_j\leq m_i.$ By Lemma~\ref{lem_djmimj},
$$d_i\leq m_i(m_j-1)+1\leq k^2-k+1$$
and so does $\Delta(G),$ with equality if and only if each vertex $j$ adjacent to $i$ of degree $d_i=k^2-k+1$ must have degree $d_j=k$ and all neighbors of $j$ except for $i$ have degree $1.$ Hence (i) and (ii) are equivalent. Additionally, the implication (ii)$\Rightarrow$(iii) is clear. We show that (iii) implies (iv) and (iv) implies (i).

\smallskip

To prove that (iii) implies (iv), let $G$ be a pseudo $k$-regular tree containing a longest path $P = v_1,v_2,v_3,\ldots,v_\ell.$ Note that $\ell\geq 4$ since $k\geq 2.$ Then the end $v_1$ is a leaf, and hence $v_2$ is a vertex of degree and average $2$-degree $d_{v_2}=m_{v_2}=k.$ Since $P$ is of maximum length in $G,$ all neighbors of $v_2$ except for $v_3$ are leafs, and $v_2$ is a vertex satisfying the conditions. To prove that (iv) implies (i), assume (iv) holds. Then the exceptional neighbor of $j$ has degree $k^2-k+1$ since $d_jm_j=k^2.$ The proof is complete.
\end{proof}

\medskip

If $G$ is a pseudo $k$-regular graph, then the unique neighbor of a vertex of degree $1$ has degree $k$ in $G.$ We have seen in the previous proof that any neighbor of a vertex of degree $k^2-k+1$ also has degree $k.$ One might ask what other vertices have their neighbors of the same degree $k.$
\begin{lemma}       \label{lem_k^2-3k+4}
Let $G$ be a pseudo $k$-regular graph where $k\geq 3,$ and $ij$ be an edge in $G$ with $2\leq d_j<k.$ Then
$$2\leq d_i\leq k^2-3k+4,$$
with the second equality if and only if $d_j=k-1$ and all neighbors of $j$ except for $i$ have degree $d_j=2.$
\end{lemma}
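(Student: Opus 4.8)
The plan is to use that $G$ is $k$-harmonic, so $m_i=m_j=k$ for every vertex, and to sharpen the counting argument behind Lemma~\ref{lem_djmimj} by exploiting the hypothesis $d_j<k$. For the lower bound $d_i\geq 2$, I would argue by contradiction: if $d_i=1$, then the only neighbor of $i$ is $j$, so $m_i=d_j$; but $m_i=k$ and $d_j<k$, a contradiction. Hence $d_i\geq 2$.

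For the upper bound, the first step I would establish is that $j$ has no neighbor of degree $1$. Indeed, if some neighbor $\ell$ of $j$ had $d_\ell=1$, then $j$ would be the unique neighbor of $\ell$ and $m_\ell=d_j$; since $m_\ell=k$ this would force $d_j=k$, contradicting $d_j<k$. Thus every one of the $d_j$ neighbors of $j$ has degree at least $2$. Now I would expand $d_jk=d_jm_j=\sum_{\ell:\ell j\in E(G)}d_\ell$ and separate the contribution of $i$ from that of the remaining $d_j-1$ neighbors; the per-neighbor lower bound of $2$ yields
$$d_jk\geq d_i+2(d_j-1),$$
so $d_i\leq d_j(k-2)+2$. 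This already improves on the estimate $d_i\leq d_j(k-1)+1$ coming from Lemma~\ref{lem_djmimj}, where degree-$1$ neighbors are permitted; the gain is exactly what the assumption $d_j<k$ buys us.

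To finish I would combine $d_j\leq k-1$ with $k-2>0$ (this is where $k\geq 3$ enters) to bound monotonically $d_i\leq(k-1)(k-2)+2=k^2-3k+4$. The equality analysis then tracks the two inequalities used: equality in $d_jk\geq d_i+2(d_j-1)$ forces every neighbor of $j$ other than $i$ to have degree exactly $2$, while equality in $d_j(k-2)+2\leq(k-1)(k-2)+2$ forces $d_j=k-1$; conversely, these two conditions visibly give $d_i=k^2-3k+4$. The crux of the argument is recognizing that Lemma~\ref{lem_djmimj} is too weak here and must be upgraded to the stronger per-neighbor lower bound of $2$, which the no-degree-$1$-neighbor observation licenses precisely because $d_j<k$; the remaining estimates are routine.
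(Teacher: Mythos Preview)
Your proof is correct and follows essentially the same approach as the paper: deduce $d_i\neq 1$ and that $j$ has no degree-$1$ neighbors from $d_j<k$, use this to sharpen the neighbor-sum bound to $d_jk\geq d_i+2(d_j-1)$, and then combine $d_i\leq (k-2)d_j+2$ with $d_j\leq k-1$. Your equality analysis is also the same, just spelled out more explicitly than in the paper.
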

\begin{proof}
Since $d_j<k,$ $d_i\neq1$ which implies the first inequality. Moreover, all neighbors of $j$ have degree more than $1.$ Hence
$$d_i+2(d_j-1)\leq d_jm_j = d_jk,$$
and we have
$$d_i\leq (k-2)d_j+2 \leq k^2-3k+4.$$
The result follows by observing the above inequalities.
\end{proof}

\medskip

\begin{corollary}       \label{cor_k^2-3k+5}
Let $G$ be a pseudo $k$-regular graph of order $n$ with a vertex $i$ of degree $d_i\geq k^2-3k+5.$ Then
\begin{itemize}
\item[(i)]      every neighbor $j$ of $i$ has degree $d_j=k,$ and
\item[(ii)]     the order of $G$ is at least
        $$f(k):=\left\lceil~\frac{5k^4-31k^3+94k^2-140k+100}{k^2}~\right\rceil.$$
\end{itemize}
\end{corollary}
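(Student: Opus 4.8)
The plan is to establish (i) first, then feed it into a global degree-counting argument built on Lemma~\ref{lem_sum_dimi}.

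For (i), I would argue by the contrapositive of Lemma~\ref{lem_k^2-3k+4}. Since $d_i\geq k^2-3k+5>k^2-3k+4$, no neighbor $j$ of $i$ can satisfy $2\leq d_j<k$; hence every neighbor has $d_j=1$ or $d_j\geq k$. A leaf neighbor is impossible: if $d_j=1$ then $i$ is the unique neighbor of $j$, so $k=m_j=d_i$, contradicting $d_i\geq k^2-3k+5>k$ (indeed $k^2-4k+5=(k-2)^2+1>0$). Thus $d_j\geq k$ for every neighbor $j$. Finally, since $G$ is $k$-harmonic, $\sum_{j:ij\in E(G)}d_j=d_im_i=kd_i$, so the mean of the $d_i$ neighbor-degrees equals $k$ while each is at least $k$; equality forces $d_j=k$ for all $j$, proving (i).

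For (ii), the key identity is $\sum_v d_v^2=\sum_v d_im_i=k\sum_v d_v$, obtained from Lemma~\ref{lem_sum_dimi} together with $m_v=k$, which rearranges to $\sum_{v\in V(G)}d_v(d_v-k)=0$. I would partition $V(G)$ into $\{i\}$, the neighbor set $A$ of $i$, and the remaining set $B$. By (i) every vertex of $A$ has degree $k$ and contributes $0$, vertex $i$ contributes $d_i(d_i-k)>0$, so $\sum_{v\in B}d_v(d_v-k)=-d_i(d_i-k)$. Completing the square gives $d_v(d_v-k)=(d_v-\tfrac{k}{2})^2-\tfrac{k^2}{4}\geq -\tfrac{k^2}{4}$ for every $v$, whence $|B|\cdot\tfrac{k^2}{4}\geq d_i(d_i-k)$, i.e. $|B|\geq 4d_i(d_i-k)/k^2$. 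Because $i\notin A\cup B$ and, crucially by (i), no vertex of degree $<k$ lies in $A$, the three parts are disjoint and every low-degree ``balancing'' vertex genuinely sits in $B$, so
$$n=1+d_i+|B|\geq 1+d_i+\frac{4d_i(d_i-k)}{k^2}=:g(d_i).$$

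It then remains to optimize $g$ over the admissible range $d_i\geq k^2-3k+5$. Here $g$ is a quadratic in $d_i$ with positive leading coefficient $4/k^2$ and vertex at $d_i=k(4-k)/8\leq 3/8$, so it is increasing on the entire range $d_i\geq k^2-3k+5$ and attains its minimum at $d_i=k^2-3k+5$. A direct expansion of $(k^2-3k+5)(k^2-4k+5)$ shows $g(k^2-3k+5)=(5k^4-31k^3+94k^2-140k+100)/k^2$, and since $n$ is an integer we conclude $n\geq f(k)$. The only genuinely delicate points are the disjointness bookkeeping---which is exactly what (i) secures, preventing a degree-$<k$ vertex from being double counted as both a neighbor of $i$ and a balancing vertex---and verifying the monotonicity of $g$ on the relevant range; the remaining polynomial arithmetic is routine.
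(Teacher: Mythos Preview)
Your proof is correct and follows essentially the same route as the paper: part~(i) via the contrapositive of Lemma~\ref{lem_k^2-3k+4} plus the averaging observation $\sum_{j\sim i}d_j=kd_i$, and part~(ii) via Lemma~\ref{lem_sum_dimi} rewritten as $\sum_v d_v(d_v-k)=0$, then the quadratic bound $d_v(k-d_v)\le k^2/4$ on the complement of $\{i\}\cup G_1(i)$. The only cosmetic difference is that the paper substitutes the lower bound $d_i\ge k^2-3k+5$ directly into both sides of the inequality $d_i(d_i-k)\le (k/2)^2(n-1-d_i)$, whereas you package the right-hand side as $g(d_i)$ and argue monotonicity; these are equivalent. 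One small expository remark: what part~(i) buys you in~(ii) is not ``disjointness'' (the partition $\{i\}\cup A\cup B$ is disjoint by definition) but rather that every vertex of $A$ contributes exactly $0$ to $\sum_v d_v(d_v-k)$, so the entire negative mass must be carried by $B$.
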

\begin{proof}
To prove (i), since $k^2-3k+5$ clearly greater than $k,$ any vertex $j$ adjacent to $i$ has degree more than $1.$ By the contrapositive statement of Lemma~\ref{lem_k^2-3k+4}, we have $d_j\geq k.$ The argument is true for all neighbors of $i,$ and hence $d_j=k.$
To prove (ii), let $S=V(G)\setminus(\{i\}\cup G_1(i)).$  By Lemma~\ref{lem_sum_dimi} and (i),
$$d_ik+d_ik^2+\sum_{j\in S} d_jk = d_i^2+d_ik^2+\sum_{j\in S} d_j^2.$$
Consequently,
\begin{eqnarray}
(k^2-3k+5)(k^2-4k+5)&\leq& d_i(d_i-k)=\sum_{j\in S}d_j(k-d_j)
\nonumber   \\
&\leq& \left(\frac{k}{2}\right)^2(n-1-(k^2-3k+5)),
\label{eq_quad_ineq}
\end{eqnarray}
where~\eqref{eq_quad_ineq} is from the quadratic inequality $x(k-x)\leq (k/2)^2$ for all $x\in\mathbb{R}.$ Simplify the above inequality, and we have the proof.
\end{proof}

\medskip

The above result tells that a pseudo $3$-regular graph of maximum degree more than $4$ has number of vertices at least $f(3)=11.$
This gives hope to determine all connected pseudo $3$-regular graphs of orders up to $10$.
Recall that in Theorem~\ref{thm_Tk} we prove that $4\leq \Delta(G)\leq 7,$ and solve the case $\Delta(G)=7.$
\begin{corollary}       \label{cor_di4seq}
Let $G$ be a pseudo $3$-regular graph.
\begin{itemize}
\item[(i)]      If $i\in V(G)$ has degree $d_i=6,$ then each neighbor $j$ of $i$ has degree $d_j=3,$ and the neighbors of $j$ have degree sequence $(6,2,1).$
\item[(ii)]     If $i\in V(G)$ has degree $d_i=5,$ then each neighbor $j$ of $i$ has degree $d_j=3,$ and the neighbors of $j$ have degree sequence $(5,2,2)$ or $(5,3,1).$
\item[(iii)]    If $i\in V(G)$ has degree $d_i=4,$ then the neighbors of $i$ have degree sequence $(3,3,3,3),$ $(4,3,3,2),$ or $(4,4,2,2).$
\end{itemize}
\end{corollary}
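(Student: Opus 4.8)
The plan is to dispatch the three cases separately. Parts (i) and (ii) are essentially immediate consequences of Corollary~\ref{cor_k^2-3k+5} combined with the harmonic identity, while part (iii) needs a short two-sided estimate on the neighbor degrees followed by a finite enumeration. Throughout I use that for $k=3$ the relevant thresholds become $k^2-3k+5=5$, and that $G$ being pseudo $3$-regular means $m_j=3$ for every vertex $j$, so a degree-$3$ vertex has its three neighbor degrees summing to $d_jm_j=9$.

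For (i), since $d_i=6\geq 5$, Corollary~\ref{cor_k^2-3k+5}(i) forces $d_j=3$ for every neighbor $j$ of $i$. The three neighbors of such a $j$ have degrees summing to $9$; one of them is $i$ with $d_i=6$, so the remaining two have positive integer degrees summing to $3$, whose only multiset is $\{1,2\}$, giving the degree sequence $(6,2,1)$. Part (ii) runs identically: $d_i=5\geq 5$ again forces $d_j=3$ for every neighbor $j$, and now the two neighbors of $j$ other than $i$ have degrees summing to $9-5=4$, whose positive integer multisets are $\{1,3\}$ and $\{2,2\}$, yielding $(5,3,1)$ and $(5,2,2)$.

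For (iii) the degree $d_i=4$ lies below the threshold $5$, so Corollary~\ref{cor_k^2-3k+5} does not apply to $i$ itself; instead I will pin the neighbor degrees from both sides. First, no neighbor $j$ can have $d_j=1$, since then $m_j$ would equal $d_i=4\neq 3$. Second, no neighbor can have degree at least $5$: by Corollary~\ref{cor_k^2-3k+5}(i) any vertex of degree $\geq 5$ has all of its neighbors of degree $3$, which is incompatible with being adjacent to $i$ (of degree $4$); equivalently this is already recorded in the just-proved parts (i) and (ii). Hence every neighbor of $i$ has degree in $\{2,3,4\}$, and since $m_i=3$ the four neighbor degrees sum to $d_im_i=12$. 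Enumerating the multisets of four elements of $\{2,3,4\}$ with sum $12$ leaves exactly $(3,3,3,3)$, $(4,3,3,2)$, and $(4,4,2,2)$.

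The arithmetic is elementary, so the only point requiring genuine care is the two-sided pinch in (iii): I must correctly invoke Corollary~\ref{cor_k^2-3k+5}(i) (or parts (i)/(ii)) to exclude neighbors of degree $5,6,7$ and the harmonic/degree-one observation to exclude degree $1$. Once the neighbor degrees are confined to $\{2,3,4\}$, the enumeration against the fixed sum $12$ is routine and presents no real obstacle.
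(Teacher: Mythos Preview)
Your proof is correct and follows essentially the same approach as the paper, which simply states that all three parts are straightforward from Corollary~\ref{cor_k^2-3k+5}(i). You have merely spelled out the routine details the paper omits: invoking $m_j=3$ to fix the neighbor-degree sums, and in (iii) applying Corollary~\ref{cor_k^2-3k+5}(i) to the \emph{neighbor} (rather than to $i$) in order to rule out degrees $\geq 5$, together with the trivial exclusion of degree~$1$.
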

\begin{proof}
They are all straightforward from Corollary~\ref{cor_k^2-3k+5}(i).
\end{proof}

\medskip

\begin{lemma}       \label{lem_a1a2a3a4}
Let $G$ be a connected pseudo $3$-regular graph with maximum degree $4$ and $a_j=|\{i\in V(G)\mid d_i=j\}|$ for $j=1,2,3,4.$ Then
\begin{itemize}
\item[(i)]      $a_1+a_2=2a_4,$
\item[(ii)]     $|V(G)| = a_3+3a_4,$
\item[(iii)]    $a_1\leq a_3,$ and
\item[(iv)]     $a_1,a_2,a_3$ have the same parity.
\end{itemize}
\end{lemma}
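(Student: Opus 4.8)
The plan is to treat the four parts essentially as counting identities driven by the single identity of Lemma~\ref{lem_sum_dimi}, together with one short structural observation for (iii). Since $\Delta(G)=4$, every degree lies in $\{1,2,3,4\}$, so I can rewrite every degree sum as a linear combination of $a_1,a_2,a_3,a_4$.

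First I would prove (i). Because $G$ is $3$-harmonic, $m_i=3$ for every $i$, so Lemma~\ref{lem_sum_dimi} reads $3\sum_{i=1}^n d_i=\sum_{i=1}^n d_i^2$. Writing $\sum_i d_i=a_1+2a_2+3a_3+4a_4$ and $\sum_i d_i^2=a_1+4a_2+9a_3+16a_4$ and subtracting, the coefficient of $a_3$ cancels and one is left with $2a_1+2a_2-4a_4=0$, i.e. $a_1+a_2=2a_4$. Part (ii) is then immediate: substituting $a_1+a_2=2a_4$ into $|V(G)|=a_1+a_2+a_3+a_4$ gives $|V(G)|=a_3+3a_4$.

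For (iii) I would argue by constructing an injection from the degree-$1$ vertices into the degree-$3$ vertices. Any vertex $i$ with $d_i=1$ has a single neighbor whose degree equals $m_i=3$, so that neighbor has degree exactly $3$. I claim no degree-$3$ vertex can host two such leaves: if a vertex $j$ with $d_j=3$ had two neighbors of degree $1$, then $d_jm_j=9$ would force its remaining neighbor to have degree $9-1-1=7$, contradicting $\Delta(G)=4$. Hence sending each degree-$1$ vertex to its degree-$3$ neighbor is well defined and injective, which yields $a_1\leq a_3$.

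Finally, (iv) reduces to two parity congruences. The handshake identity $\sum_i d_i=2|E(G)|$ shows the number of odd-degree vertices is even; since the odd degrees available are $1$ and $3$, this means $a_1+a_3$ is even, so $a_1\equiv a_3\pmod 2$. On the other hand, part (i) gives $a_1+a_2=2a_4$, whence $a_1\equiv a_2\pmod 2$. Combining the two congruences, $a_1,a_2,a_3$ share a common parity. The whole lemma is thus essentially bookkeeping around Lemma~\ref{lem_sum_dimi} and the handshake lemma; the only step that is not pure counting is the injectivity claim in (iii), so I expect that the verification that a degree-$3$ vertex cannot carry two leaves---resting on $\Delta(G)=4$---is the one point deserving care.
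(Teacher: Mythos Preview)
Your proof is correct and follows essentially the same approach as the paper: (i)--(ii) from Lemma~\ref{lem_sum_dimi}, (iii) via the adjacency injection from degree-$1$ to degree-$3$ vertices, and (iv) from the handshake lemma combined with (i). Your argument for (iii) is in fact more complete than the paper's, which asserts the injection exists without verifying that a degree-$3$ vertex cannot be adjacent to two leaves.
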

\begin{proof}
(i) and (ii) follow from solving the equation in Lemma~\ref{lem_sum_dimi}. (iii) follows, since there exists an injection from the set of degree $1$ vertices into the set of degree $3$ vertices defined by the adjacency. There are even number of vertices of odd degrees, so $a_1+a_3$ is even. The remaining of (iv) follows from (i).
\end{proof}

\medskip

By Proposition~\ref{prop_AnalogueEG}, Corollary~\ref{cor_di4seq}(iii), Lemma~\ref{lem_a1a2a3a4}, and checking the graphicness exhaustively, if $G$ is a pseudo $3$-regular graph of order $n\leq 10,$ the parameters $(n,a_4,a_3,a_2,a_1)$ can only be $(7,1,4,2,0),$ $(8,2,2,2,2),$ $(9,1,6,2,0),$ $(9,2,3,3,1),$ $(9,3,0,6,0),$ $(10,2,4,0,4),$ or $(10,2,4,4,0).$ Their corresponding graphs are presented in Figure 4.

\medskip

\begin{center}
%\begin{multicols}{2}
\begin{picture}(100,60)
\put(10,30){\circle*{3}}
\put(30,10){\circle*{3}}    \put(30,50){\circle*{3}}
\put(50,30){\circle*{3}}
\put(70,10){\circle*{3}}    \put(70,50){\circle*{3}}
\put(90,30){\circle*{3}}
\qbezier(50,30)(40,20)(30,10) \qbezier(50,30)(40,40)(30,50)
\qbezier(50,30)(60,20)(70,10) \qbezier(50,30)(60,40)(70,50)
\qbezier(10,30)(20,20)(30,10) \qbezier(10,30)(20,40)(30,50)
\qbezier(90,30)(80,20)(70,10) \qbezier(90,30)(80,40)(70,50)
\qbezier(30,10)(50,10)(70,10) \qbezier(30,50)(50,50)(70,50)
\end{picture}
~~~
\begin{picture}(100,60)
\put(10,30){\circle*{3}}
\put(30,10){\circle*{3}}    \put(30,50){\circle*{3}}
\put(50,30){\circle*{3}}
\put(70,10){\circle*{3}}    \put(70,50){\circle*{3}}
\put(90,30){\circle*{3}}
\qbezier(50,30)(40,20)(30,10) \qbezier(50,30)(40,40)(30,50)
\qbezier(50,30)(60,20)(70,10) \qbezier(50,30)(60,40)(70,50)
\qbezier(10,30)(20,20)(30,10) \qbezier(10,30)(20,40)(30,50)
\qbezier(90,30)(80,20)(70,10) \qbezier(90,30)(80,40)(70,50)
\qbezier(30,10)(30,30)(30,50) \qbezier(70,10)(70,30)(70,50)
\end{picture}
~~~
\begin{picture}(100,60)
\put(5,15){\circle*{3}}     \put(5,45){\circle*{3}}
\put(35,15){\circle*{3}}    \put(35,45){\circle*{3}}
\put(65,15){\circle*{3}}    \put(65,45){\circle*{3}}
\put(95,15){\circle*{3}}    \put(95,45){\circle*{3}}
\qbezier(5,15)(20,15)(35,15)   \qbezier(5,45)(20,45)(35,45)
\qbezier(35,15)(50,15)(65,15)   \qbezier(35,45)(50,45)(65,45)
\qbezier(65,15)(80,15)(95,15)   \qbezier(65,45)(80,45)(95,45)
\qbezier(35,15)(50,30)(65,45)   \qbezier(35,45)(50,30)(65,15)
\qbezier(65,15)(65,30)(65,45)   \qbezier(95,15)(95,30)(95,45)
\end{picture}
~~~
\begin{picture}(100,60)
\put(10,30){\circle*{3}}
\put(30,10){\circle*{3}}    \put(30,50){\circle*{3}}
\put(50,30){\circle*{3}}
\put(70,10){\circle*{3}}    \put(70,50){\circle*{3}}
\put(90,30){\circle*{3}}
\put(50,10){\circle*{3}}    \put(50,50){\circle*{3}}
\qbezier(50,30)(40,20)(30,10) \qbezier(50,30)(40,40)(30,50)
\qbezier(50,30)(60,20)(70,10) \qbezier(50,30)(60,40)(70,50)
\qbezier(10,30)(20,20)(30,10) \qbezier(10,30)(20,40)(30,50)
\qbezier(90,30)(80,20)(70,10) \qbezier(90,30)(80,40)(70,50)
\qbezier(30,10)(50,10)(70,10) \qbezier(30,50)(50,50)(70,50)
\qbezier(50,10)(40,30)(50,50)
\end{picture}

\medskip

%\begin{multicols}{2}
\begin{picture}(100,60)
\put(10,30){\circle*{3}}
\put(30,10){\circle*{3}}    \put(30,50){\circle*{3}}
\put(50,30){\circle*{3}}
\put(70,10){\circle*{3}}    \put(70,50){\circle*{3}}
\put(90,30){\circle*{3}}
\put(30,30){\circle*{3}}    \put(70,30){\circle*{3}}
\qbezier(50,30)(40,20)(30,10) \qbezier(50,30)(40,40)(30,50)
\qbezier(50,30)(60,20)(70,10) \qbezier(50,30)(60,40)(70,50)
\qbezier(10,30)(20,20)(30,10) \qbezier(10,30)(20,40)(30,50)
\qbezier(90,30)(80,20)(70,10) \qbezier(90,30)(80,40)(70,50)
\qbezier(30,10)(30,30)(30,50) \qbezier(70,10)(70,30)(70,50)
\qbezier(30,30)(50,20)(70,30)
\end{picture}
~~~
\begin{picture}(100,60)
\put(10,10){\circle*{3}}    \put(10,50){\circle*{3}}
\put(50,10){\circle*{3}}    \put(50,50){\circle*{3}}
\put(90,10){\circle*{3}}    \put(90,50){\circle*{3}}
\put(25,30){\circle*{3}}    \put(65,30){\circle*{3}}    \put(80,30){\circle*{3}}
\qbezier(10,10)(10,30)(10,50)   \qbezier(50,10)(50,30)(50,50)   \qbezier(90,10)(90,30)(90,50)
\qbezier(10,10)(30,10)(50,10)   \qbezier(90,10)(70,10)(50,10)
\qbezier(10,50)(30,50)(50,50)   \qbezier(90,50)(70,50)(50,50)
\qbezier(10,10)(17.5,20)(25,30) \qbezier(10,50)(17.5,40)(25,30)
\qbezier(50,10)(57.5,20)(65,30) \qbezier(50,50)(57.5,40)(65,30)    \qbezier(80,30)(72.5,30)(65,30)
\end{picture}
~~~
\begin{picture}(100,60)
\put(20,10){\circle*{3}}    \put(80,10){\circle*{3}}
\put(5,25){\circle*{3}}    \put(35,25){\circle*{3}}
\put(65,25){\circle*{3}}    \put(95,25){\circle*{3}}
\put(50,40){\circle*{3}}
\put(35,55){\circle*{3}}    \put(65,55){\circle*{3}}
\qbezier(20,10)(12.5,17.5)(5,25)    \qbezier(20,10)(27.5,17.5)(35,25)
\qbezier(80,10)(72.5,17.5)(65,25)    \qbezier(80,10)(87.5,17.5)(95,25)
\qbezier(5,25)(20,25)(35,25)
\qbezier(35,25)(50,25)(65,25)
\qbezier(65,25)(80,25)(95,25)
\qbezier(35,25)(42.5,32.5)(50,40)    \qbezier(65,25)(57.5,32.5)(50,40)
\qbezier(35,55)(42.5,47.5)(50,40)    \qbezier(65,55)(57.5,47.5)(50,40)
\qbezier(35,55)(50,55)(65,55)
\end{picture}
~~~
\begin{picture}(100,60)
\put(20,10){\circle*{3}}    \put(80,10){\circle*{3}}
\put(5,25){\circle*{3}}    \put(35,25){\circle*{3}}
\put(65,25){\circle*{3}}    \put(95,25){\circle*{3}}
\put(50,40){\circle*{3}}
\put(35,55){\circle*{3}}    \put(65,55){\circle*{3}}
\qbezier(20,10)(57.5,17.5)(95,25)    \qbezier(20,10)(27.5,17.5)(35,25)
\qbezier(80,10)(72.5,17.5)(65,25)    \qbezier(80,10)(42.5,17.5)(5,25)
\qbezier(5,25)(20,25)(35,25)
\qbezier(35,25)(50,25)(65,25)
\qbezier(65,25)(80,25)(95,25)
\qbezier(35,25)(42.5,32.5)(50,40)    \qbezier(65,25)(57.5,32.5)(50,40)
\qbezier(35,55)(42.5,47.5)(50,40)    \qbezier(65,55)(57.5,47.5)(50,40)
\qbezier(35,55)(50,55)(65,55)
\end{picture}

\medskip

%\begin{multicols}{2}
\begin{picture}(100,60)
\put(20,10){\circle*{3}}    \put(80,10){\circle*{3}}
\put(5,25){\circle*{3}}    \put(35,25){\circle*{3}}
\put(65,25){\circle*{3}}    \put(95,25){\circle*{3}}
\put(50,40){\circle*{3}}
\put(35,55){\circle*{3}}    \put(65,55){\circle*{3}}
\qbezier(35,55)(20,40)(5,25)    \qbezier(20,10)(27.5,17.5)(35,25)
\qbezier(80,10)(72.5,17.5)(65,25)    \qbezier(80,10)(50,10)(20,10)
\qbezier(5,25)(20,25)(35,25)
\qbezier(35,25)(50,25)(65,25)
\qbezier(65,25)(80,25)(95,25)
\qbezier(35,25)(42.5,32.5)(50,40)    \qbezier(65,25)(57.5,32.5)(50,40)
\qbezier(35,55)(42.5,47.5)(50,40)    \qbezier(65,55)(57.5,47.5)(50,40)
\qbezier(95,25)(80,40)(65,55)
\end{picture}
~~~
\begin{picture}(100,60)
\put(5,5){\circle*{3}}    \put(35,5){\circle*{3}}   \put(65,5){\circle*{3}}    \put(95,5){\circle*{3}}
\put(5,30){\circle*{3}}    \put(35,30){\circle*{3}}   \put(65,30){\circle*{3}}    \put(95,30){\circle*{3}}
\put(35,55){\circle*{3}}    \put(65,55){\circle*{3}}
\qbezier(5,5)(5,17.5)(5,30)     \qbezier(35,5)(35,17.5)(35,30)
\qbezier(65,5)(65,17.5)(65,30)     \qbezier(95,5)(95,17.5)(95,30)
\qbezier(5,30)(20,42.5)(35,55)     \qbezier(5,30)(35,42.5)(65,55)
\qbezier(35,30)(35,42.5)(35,55)     \qbezier(35,30)(50,42.5)(65,55)
\qbezier(65,30)(50,42.5)(35,55)     \qbezier(65,30)(65,42.5)(65,55)
\qbezier(95,30)(65,42.5)(35,55)     \qbezier(95,30)(80,42.5)(65,55)
\end{picture}
~~~
\begin{picture}(100,60)
\put(5,5){\circle*{3}}    \put(35,5){\circle*{3}}   \put(65,5){\circle*{3}}    \put(95,5){\circle*{3}}
\put(5,30){\circle*{3}}    \put(35,30){\circle*{3}}   \put(65,30){\circle*{3}}    \put(95,30){\circle*{3}}
\put(35,55){\circle*{3}}    \put(65,55){\circle*{3}}
\qbezier(5,5)(5,17.5)(5,30)     \qbezier(35,5)(35,17.5)(35,30)
\qbezier(65,5)(65,17.5)(65,30)     \qbezier(95,5)(95,17.5)(95,30)
\qbezier(5,30)(20,30)(35,30)     \qbezier(35,30)(50,30)(65,30)      \qbezier(65,30)(80,30)(95,30)
\qbezier(35,30)(35,42.5)(35,55)     \qbezier(65,30)(65,42.5)(65,55)     \qbezier(35,55)(50,55)(65,55)
\qbezier(5,5)(20,5)(35,5)   \qbezier(5,30)(20,17.5)(35,5)
\qbezier(65,5)(80,5)(95,5)   \qbezier(95,30)(80,17.5)(65,5)
\end{picture}
~~~
\begin{picture}(100,60)
\put(5,5){\circle*{3}}    \put(35,5){\circle*{3}}   \put(65,5){\circle*{3}}    \put(95,5){\circle*{3}}
\put(5,30){\circle*{3}}    \put(35,30){\circle*{3}}   \put(65,30){\circle*{3}}    \put(95,30){\circle*{3}}
\put(35,55){\circle*{3}}    \put(65,55){\circle*{3}}
\qbezier(5,5)(5,17.5)(5,30)     \qbezier(35,5)(35,17.5)(35,30)
\qbezier(65,5)(65,17.5)(65,30)     \qbezier(95,5)(95,17.5)(95,30)
\qbezier(5,30)(20,30)(35,30)     \qbezier(35,30)(50,30)(65,30)      \qbezier(65,30)(80,30)(95,30)
\qbezier(35,30)(35,42.5)(35,55)     \qbezier(65,30)(65,42.5)(65,55)     \qbezier(35,55)(50,55)(65,55)
\qbezier(5,5)(20,5)(35,5)   \qbezier(95,30)(65,17.5)(35,5)
\qbezier(65,5)(80,5)(95,5)   \qbezier(5,30)(35,17.5)(65,5)
\end{picture}

\medskip

%\begin{multicols}{2}
\begin{picture}(100,60)
\put(5,5){\circle*{3}}    \put(35,5){\circle*{3}}   \put(65,5){\circle*{3}}    \put(95,5){\circle*{3}}
\put(5,30){\circle*{3}}    \put(35,30){\circle*{3}}   \put(65,30){\circle*{3}}    \put(95,30){\circle*{3}}
\put(35,55){\circle*{3}}    \put(65,55){\circle*{3}}
\qbezier(5,5)(5,17.5)(5,30)     \qbezier(35,5)(35,17.5)(35,30)
\qbezier(65,5)(65,17.5)(65,30)     \qbezier(95,5)(95,17.5)(95,30)
\qbezier(5,30)(20,30)(35,30)     \qbezier(35,30)(50,30)(65,30)      \qbezier(65,30)(80,30)(95,30)
\qbezier(35,30)(35,42.5)(35,55)     \qbezier(65,30)(65,42.5)(65,55)     \qbezier(35,55)(50,55)(65,55)
\qbezier(95,5)(65,-5)(35,5)   \qbezier(5,30)(20,17.5)(35,5)
\qbezier(65,5)(35,-5)(5,5)   \qbezier(95,30)(80,17.5)(65,5)
\end{picture}
~~~
\begin{picture}(100,60)
\put(5,5){\circle*{3}}    \put(35,5){\circle*{3}}   \put(65,5){\circle*{3}}    \put(95,5){\circle*{3}}
\put(5,30){\circle*{3}}    \put(35,30){\circle*{3}}   \put(65,30){\circle*{3}}    \put(95,30){\circle*{3}}
\put(35,55){\circle*{3}}    \put(65,55){\circle*{3}}
\qbezier(5,5)(5,17.5)(5,30)     \qbezier(35,5)(35,17.5)(35,30)
\qbezier(65,5)(65,17.5)(65,30)     \qbezier(95,5)(95,17.5)(95,30)
\qbezier(5,30)(20,30)(35,30)     \qbezier(35,30)(50,30)(65,30)      \qbezier(65,30)(80,30)(95,30)
\qbezier(35,30)(35,42.5)(35,55)     \qbezier(65,30)(65,42.5)(65,55)     \qbezier(35,55)(50,55)(65,55)
\qbezier(95,5)(65,-5)(35,5)   \qbezier(95,30)(65,17.5)(35,5)
\qbezier(65,5)(35,-5)(5,5)   \qbezier(5,30)(35,17.5)(65,5)
\end{picture}
%\end{multicols}

\bigskip
{\bf Figure 4:} Pseudo $3$-regular graphs (up to isomorphism) of order at most $10$.
\end{center}

\bigskip

\section{Minimal pseudo regular graphs}     \label{sec_min_prg}
Recall that $N(k)$ is the minimum number of vertices in a pseudo $k$-regular graph. The pseudo $k$-regular graphs of order $N(k)$ are studied in this section for $2\leq k\leq 7.$ Then we prove that a pseudo regular $k$-regular graph of every order $n\geq N(k)$ always exists for $k=3,4.$ Lower bounds and upper bounds for $N(k)$ are also proposed.

\medskip

\subsection{Pseudo $k$-regular graphs of order $N(k)$ for $2\leq k\leq 7$}
By Lemma~\ref{t2} the tree $T_2$ given in Definition~\ref{defn_Tk} is the only pseudo $2$-regular graph, and thus $N(2)=7.$
By checking the list of graphs in Figure 4, we find $N(3)=7$ and the two graphs in Figure 1 are the only pseudo $3$-regular graphs of order $N(3).$
As an application of Proposition~\ref{prop_AnalogueEG}, $N(k)$ and all of the possible degree sequences of a pseudo $k$-regular graph on $N(k)$ vertices are obtained for $4\leq k\leq 7$ by programming, which are listed in the following table. Note that when $k=5$ the possible degree sequences are not unique.
\begin{center}
\begin{tabular}{l|c|l}
    $k$ & $N(k)$ & Possible degree sequences    \\ \hline
    $2$ & $7$ & $3,2,2,2,1,1,1$      \\
    $3$ & $7$ & $4,3,3,3,3,2,2$    \\
    $4$ & $8$ & $5,5,4,4,4,3,3,2$    \\
    $5$ & $9$ & $6,6,6,5,5,4,4,4,2$   \\
        &     & $6,6,5,5,5,5,4,4,4$     \\
    $6$ & $11$ & $8,6,6,6,6,6,6,6,6,4,4$    \\
    $7$ & $11$ & $8,8,8,7,7,7,7,6,6,6,6$
\end{tabular}

\bigskip

{\bf Table 1:} The degree sequences of pseudo $k$-regular graphs of order $N(k)$ for $2\leq k\leq 7.$
\end{center}

\medskip

For $4\leq k\leq 7,$ we also find the pseudo $k$-regular graphs of order $N(k)$. Note that when $k=4$ the corresponding graph of order $N(4)=8$ is unique up to isomorphism. While $k=5,$ there are three non-isomorphic pseudo $5$-regular graphs of order $N(5)=9.$ For $k=6,$ the pseudo $6$-regular on $N(6)=11$ vertices are given by a $4$-regular graph on $8$ vertices with one vertex adjacent to all of them, and each of the remained two vertices adjacent to $4$ of them with no common neighbor. The graphs mentioned above are shown in Figure 5. As $k=7,$ four non-isomorphic pseudo $7$-regular graphs of order $N(7)=11$ are attained by exhaustedly graphing the degree sequence given in Table 1. In fact, we will give a construction of pseudo $k$-regular graphs of order $k+4$ in Proposition~\ref{prop_upbd} for every odd $k.$

\smallskip

\begin{center}
\begin{tikzpicture}
\draw[fill] (-1.5,1.5)coordinate(A1) circle(0.1cm);
\draw[fill] (1.5,1.5)coordinate(A2) circle(0.1cm);
\draw[fill] (0,0.75)coordinate(A3) circle(0.1cm);
\draw[fill] (-1.5,0)coordinate(A4) circle(0.1cm);
\draw[fill] (1.5,0)coordinate(A5) circle(0.1cm);
\draw[fill] (-1.5,-1.5)coordinate(A6) circle(0.1cm);
\draw[fill] (1.5,-1.5)coordinate(A7) circle(0.1cm);
\draw[fill] (0,-2)coordinate(A8) circle(0.1cm);
\draw(A1)--(A5)--(A2)--(A4)--(A1)--(A2);
\draw(A4)--(A5)--(A7)--(A4)--(A6)--(A5);
\draw(A6)--(A7)--(A8)--(A6);
\end{tikzpicture}
~~~
\begin{tikzpicture}
\draw[fill] (-1.5,1.5)coordinate(A1) circle(0.1cm);
\draw[fill] (0,1.75)coordinate(A2) circle(0.1cm);
\draw[fill] (1.5,1.5)coordinate(A3) circle(0.1cm);
\draw[fill] (-1.5,0)coordinate(A4) circle(0.1cm);
\draw[fill] (0,-0.25)coordinate(A5) circle(0.1cm);
\draw[fill] (1.5,0)coordinate(A6) circle(0.1cm);
\draw[fill] (-1.5,-1.5)coordinate(A7) circle(0.1cm);
\draw[fill] (1.5,-1.5)coordinate(A8) circle(0.1cm);
\draw[fill] (0,-2)coordinate(A9) circle(0.1cm);
\draw(A1)--(A2)--(A3)--(A1)--(A5)--(A3)--(A6)--(A5)--(A4)--(A6)--(A2)--(A4)--(A1);
\draw(A4)--(A8)--(A6)--(A7)--(A5)--(A8)--(A7)--(A4);
\draw(A7)--(A9)--(A8);
\end{tikzpicture}
~~~
\begin{tikzpicture}
\draw[] plot[samples=500,domain=-90:270,variable=\t]({cos(\t)},{0.5*sin(\t)});
\draw[] (0,-0.7)coordinate(N) node{2 matchings};
\draw[fill] (-1.2,2)coordinate(A1) circle(0.1cm);
\draw[fill] (1.2,2)coordinate(A2) circle(0.1cm);
%{\draw[fill,shift={(-4,-4)}] (\x,0) circle(0.1cm);}
\draw[fill] (-0.75,0)coordinate(B1) circle(0.1cm);
\draw[fill] (-0.25,0)coordinate(B2) circle(0.1cm);
\draw[fill] (0.25,0)coordinate(B3) circle(0.1cm);
\draw[fill] (0.75,0)coordinate(B4) circle(0.1cm);
\draw[fill] (-1.2,-2)coordinate(C1) circle(0.1cm);
\draw[fill] (0,-1.5)coordinate(C2) circle(0.1cm);
\draw[fill] (1.2,-2)coordinate(C3) circle(0.1cm);
\draw(A1)--(B1)--(A2)--(B2)--(A1)--(B3)--(A2)--(B4)--(A1)--(A2)--(C3)--(C1)--(A1);
\draw(B1)--(C1)--(B2)--(C2)--(B1)--(B2);
\draw(B3)--(C3)--(B4)--(C2)--(B3)--(B4);
\end{tikzpicture}
~~~
\begin{tikzpicture}
\draw[] plot[samples=500,domain=-90:270,variable=\t]({2*cos(\t)},{0.5*sin(\t)});
\draw[] (0,-0.7)coordinate(N) node{4-regular};
\draw[fill] (0,2)coordinate(A) circle(0.1cm);
%{\draw[fill,shift={(-4,-4)}] (\x,0) circle(0.1cm);}
\draw[fill] (-1.75,0)coordinate(B1) circle(0.1cm);
\draw[fill] (-1.25,0)coordinate(B2) circle(0.1cm);
\draw[fill] (-0.75,0)coordinate(B3) circle(0.1cm);
\draw[fill] (-0.25,0)coordinate(B4) circle(0.1cm);
\draw[fill] (0.25,0)coordinate(B5) circle(0.1cm);
\draw[fill] (0.75,0)coordinate(B6) circle(0.1cm);
\draw[fill] (1.25,0)coordinate(B7) circle(0.1cm);
\draw[fill] (1.75,0)coordinate(B8) circle(0.1cm);
\draw[fill] (-1,-2)coordinate(C1) circle(0.1cm);
\draw[fill] (1,-2)coordinate(C2) circle(0.1cm);
\draw(A)--(B1)--(C1)--(B2)--(A)--(B3)--(C1)--(B4)--(A)--(B5)--(C2)--(B6)--(A)--(B7)--(C2)--(B8)--(A);
\end{tikzpicture}

\bigskip

{\bf Figure 5:} The pseudo $k$-regular graphs of order $N(k)$ for $k=4,5,6.$
\end{center}

\medskip

\subsection{Existence of pseudo $k$-regular graphs of arbitrary orders}
It is of some interest that whether there exists a pseudo $k$-regular graph of every order $n\geq N(k)$. We introduce a method to construct a pseudo $k$-regular graph of larger order from a known one. The basic idea can be referred to Figure 4. One can observe that the graph at the upper right corner of Figure 4 can be obtained from that at the upper left corner by finding two edges with two $3$-degree ends, inserting one vertex in each of them, and joining these two new vertices. Consequently, we conclude the following result.
\begin{lemma}       \label{lem_small_pkreg}
If there exists a pseudo $k$-regular graph on $n$ vertices which has $k-1$ edges with two $k$-degree end vertices, then there exists a pseudo $k$-regular graph on $n+m(k-1)$ vertices for any positive integer $m.$
\end{lemma}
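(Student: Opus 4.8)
The plan is to give a single explicit construction, parametrized by $m$, that generalizes the $m=1$ operation described just before the statement (subdivide two---more precisely $k-1$---edges, then link the newly inserted vertices). Let $G$ be the hypothesized pseudo $k$-regular graph on $n$ vertices and let $e_1,\dots,e_{k-1}$ be the $k-1$ edges whose endpoints all have degree $k$; write $e_t=u_tv_t$. I would build the new graph $H$ in two steps. First, for each $t$ I replace the edge $e_t$ by a path $u_t\,w_{t,1}\,w_{t,2}\cdots w_{t,m}\,v_t$ on $m$ fresh internal vertices; this introduces exactly $m(k-1)$ new vertices, giving the required order $n+m(k-1)$. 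Second, for each $j\in\{1,\dots,m\}$ I make the $j$-th layer $\{w_{1,j},w_{2,j},\dots,w_{k-1,j}\}$ into a clique $K_{k-1}$.

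Next I would verify the degrees. Subdividing an edge leaves the degrees of its two endpoints unchanged (each loses the edge $u_tv_t$ but gains one path edge), so every original vertex of $G$ keeps its exact degree in $H$. Each new vertex $w_{t,j}$ has two path-neighbours and, through its layer clique, exactly $k-2$ further neighbours, so $d_{w_{t,j}}=2+(k-2)=k$. Since $G$ is pseudo $k$-regular it has a vertex of degree $\neq k$, which survives in $H$; hence $H$ is not $k$-regular, and it remains only to check that $H$ is $k$-harmonic.

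The heart of the argument is to show $m_i=k$ for every vertex $i$ of $H$. For a new vertex $w_{t,j}$ this is immediate: both of its path-neighbours are either the original endpoints $u_t,v_t$ (of degree $k$ by hypothesis) or other new vertices (of degree $k$ by the previous paragraph), and its $k-2$ clique-neighbours are new vertices of degree $k$; thus all $k$ of its neighbours have degree $k$ and $m_{w_{t,j}}=k$. For an original vertex $u$ I would argue that the multiset of degrees of its neighbours is unchanged when passing from $G$ to $H$: any neighbour reached by a non-subdivided edge keeps both its identity and (since subdivision preserves all original degrees) its degree, while for each subdivided edge at $u$ the old neighbour of degree $k$ is replaced by the adjacent new path vertex, which also has degree $k$. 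As $d_u$ is unchanged as well, $m_u$ equals its value in $G$, namely $k$. Combining the two cases, $H$ is $k$-harmonic, hence pseudo $k$-regular, of order $n+m(k-1)$.

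I expect the main obstacle to be bookkeeping the average $2$-degree for original vertices that happen to be endpoints of several of the $e_t$ (the $e_t$ need not form a matching): one must be sure that each such swap of a degree-$k$ neighbour for a new degree-$k$ vertex is neutral, which is exactly what the hypothesis that both endpoints of every $e_t$ have degree $k$ guarantees. A secondary point worth stating explicitly is that all edges introduced (path edges and clique edges) are genuinely new, since they are incident to the fresh vertices, so no multiple edges arise; this is why a single construction works for every $m$ and one avoids the induction route, which would force relocating $k-1$ suitable degree-$k$ edges after each step---something not obviously possible when $k=3$, since then each layer clique $K_{k-1}=K_2$ supplies only one such edge.
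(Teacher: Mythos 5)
Your proof is correct. The base construction (subdivide the $k-1$ designated edges, join the newly inserted vertices into a clique so each gets degree $2+(k-2)=k$) is exactly the paper's, but you differ in how you reach general $m$: the paper performs the one-layer operation once and then iterates, asserting that the resulting graph again has at least $k-1$ edges with two $k$-degree ends, whereas you build all $m$ layers in a single construction, with each edge $e_t$ replaced by a path on $m$ internal vertices and each layer made into a $K_{k-1}$. Your route buys you something real: you never have to re-locate suitable edges in the intermediate graphs, and your degree and average-$2$-degree verification is done once, uniformly in $m$. Your worry about the induction when $k=3$ (the layer clique $K_2$ contributing only one edge) is understandable but does not actually break the paper's argument: after one subdivision step, each of the $2(k-1)$ new path edges $u_tw_t$ and $w_tv_t$ joins two degree-$k$ vertices, so the iterated construction always has at least $k-1$ qualifying edges to work with. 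Still, the paper leaves this as ``not difficult to check,'' and your one-shot construction sidesteps the issue cleanly; your explicit handling of non-matching $e_t$'s sharing endpoints and of the simplicity of the new edges is also more careful than the paper's.
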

\begin{proof}
Suppose that $e_1,e_2,\ldots,e_{k-1}$ are the edges with two $k$-degree ends. For $i=1,2,\ldots,k-1,$ insert one vertex $v_i$ in $e_i$ by deleting $e_i$ and joining two edges between $v_i$ and two ends of $e_i,$ respectively. Then, add edges $v_iv_j$ for all $1\leq v_i < v_j\leq k-1.$ It is not difficult to check that the new graph on $n+k-1$ vertices is still pseudo $k$-regular and has no less than $k-1$ edges with two $k$-degree ends. Using the above construction method to generate new larger graphs pseudo $k$-regular repeatedly, we can increase the number of vertices by $k-1$ in each operation.
\end{proof}

\medskip

By some small pseudo $k$-regular graphs and applying Lemma~\ref{lem_small_pkreg}, we can attain a pseudo $k$-regular graph of every order at least $N(k)$ for $k=3,4.$
\begin{corollary}
For $k=3,4$ there exists a pseudo $k$-regular graph on $n$ vertices for every $n\geq N(k).$
\end{corollary}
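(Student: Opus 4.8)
The plan is to derive the corollary entirely from Lemma~\ref{lem_small_pkreg} by supplying a short, explicit list of base graphs. That lemma turns a pseudo $k$-regular graph on $n$ vertices carrying $k-1$ edges with two degree-$k$ endpoints into pseudo $k$-regular graphs on $n+m(k-1)$ vertices for every $m\ge 1$, and its proof shows the enlarged graphs again carry at least $k-1$ such edges, so the construction may be iterated without extra bookkeeping. Consequently it suffices to exhibit, in each residue class $r$ modulo $k-1$, one base graph of order $\equiv r$ equipped with $k-1$ edges joining two degree-$k$ vertices (call these \emph{good edges}); the lemma then fills that whole arithmetic progression from the base onward, and the finitely many orders lying below a base (all of which are $\ge N(k)$) are covered by displaying concrete graphs. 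Thus the first step is to pin down these residue-class seeds, and the real content lies in verifying the ``$k-1$ good edges'' condition.

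For $k=3$ the common difference is $k-1=2$, so I must seed the odd and the even orders. For the odd class I would use the $7$-vertex graph of Figure~1, whose two horizontal edges each join two degree-$3$ vertices; those two good edges yield, via Lemma~\ref{lem_small_pkreg}, pseudo $3$-regular graphs of every odd order $n\ge 7$. For the even class note that any order-$8$ pseudo $3$-regular graph has only $a_3=2$ vertices of degree $3$, hence at most one good edge, so order $8$ cannot serve as a seed; instead I would seed with the $10$-vertex graph at the lower right of Figure~4, whose four degree-$3$ vertices support two vertex-disjoint good edges. Together with the order-$8$ and order-$10$ graphs displayed directly, this realizes every even order $n\ge 8$, and hence all $n\ge N(3)=7$.

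For $k=4$ the difference is $k-1=3$, so the three classes $n\equiv 8,9,10\pmod 3$ must each be seeded, and here I must produce \emph{three} good edges in each base. The order-$8$ graph of Figure~5 handles the class of $8$, provided its degree-$4$ vertices span three edges; for the remaining classes I would construct explicit pseudo $4$-regular graphs of orders $9$ and $10$. In a base with exactly three degree-$4$ vertices the three good edges force those vertices to form a triangle, while with more degree-$4$ vertices a path or matching of three good edges suffices, so some care in the construction is needed to keep $m_i\equiv 4$ at every vertex while arranging enough mutually incident degree-$4$ edges. This verification—especially building orders $9$ and $10$ with three good edges—is the step I expect to be the main obstacle; should the minimal order-$8$ graph fail to carry three good edges, the remedy is to seed the class of $8$ with a larger graph and fill order $8$ from its explicit classification. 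Once the three bases are secured, Lemma~\ref{lem_small_pkreg} realizes every order in the classes $8,9,10\pmod 3$, i.e. every $n\ge N(4)=8$.
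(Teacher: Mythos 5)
Your strategy is exactly the paper's: seed each residue class modulo $k-1$ with a pseudo $k$-regular graph carrying $k-1$ edges whose endpoints both have degree $k$, iterate Lemma~\ref{lem_small_pkreg}, and exhibit the one leftover small order directly. For $k=3$ your argument is complete and coincides with the paper's (the order-$7$ graph seeds the odd orders, a suitable order-$10$ graph seeds the even orders, and order $8$ is displayed separately); your parity observation that $a_3=2$ forces order $8$ to fail as a seed is a nice addition the paper does not spell out.

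For $k=4$, however, there is a genuine gap. First, your proposed seed for the class of $8\pmod 3$ would fail: the pseudo $4$-regular graph on $N(4)=8$ vertices is unique up to isomorphism with degree sequence $5,5,4,4,4,3,3,2$, and a short case analysis (every degree-$4$ vertex is adjacent to both degree-$5$ vertices, and its remaining two neighbours have degree sum $6$) shows its three degree-$4$ vertices span exactly \emph{one} edge, not the required three. You anticipated this and named the correct remedy --- seed that class with a larger graph and display order $8$ separately, which is what the paper does with an order-$11$ base --- but you did not carry it out. Second, and more importantly, the entire content of the $k=4$ case is the explicit construction of pseudo $4$-regular graphs of orders $9$, $10$ and $11$ each containing three edges with two degree-$4$ ends; you explicitly defer this (``the step I expect to be the main obstacle''), whereas the paper exhibits all three graphs. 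Verifying that such graphs exist is not routine --- one must simultaneously keep $m_i=4$ at every vertex and arrange three mutually verifiable good edges --- so until those three base graphs are produced, the $k=4$ half of the corollary is not proved.
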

\begin{proof}
For $k=3,$ we have $N(3)=7.$ By Lemma~\ref{lem_small_pkreg}, the two pseudo $3$-regular graphs given below of orders $7$ and $10$ which have two edges $e_1,e_2$ and $e_3,e_4$ with $3$-degree ends, respectively, provide the existence of pseudo $3$-regular graphs of order $n\geq N(k)$ except for $n=8.$ In particular, a pseudo $3$-regular graph of order $8$ is also shown as follows.
\begin{center}
\begin{picture}(100,60)
\put(10,30){\circle*{3}}
\put(30,10){\circle*{3}}    \put(30,50){\circle*{3}}
\put(50,30){\circle*{3}}
\put(70,10){\circle*{3}}    \put(70,50){\circle*{3}}
\put(90,30){\circle*{3}}
\put(42,55){$e_1$}
\put(42,0){$e_2$}
\qbezier(50,30)(40,20)(30,10) \qbezier(50,30)(40,40)(30,50)
\qbezier(50,30)(60,20)(70,10) \qbezier(50,30)(60,40)(70,50)
\qbezier(10,30)(20,20)(30,10) \qbezier(10,30)(20,40)(30,50)
\qbezier(90,30)(80,20)(70,10) \qbezier(90,30)(80,40)(70,50)
\qbezier(30,10)(50,10)(70,10) \qbezier(30,50)(50,50)(70,50)
\end{picture}
~~~~~~
\begin{picture}(100,60)
\put(5,5){\circle*{3}}    \put(35,5){\circle*{3}}   \put(65,5){\circle*{3}}    \put(95,5){\circle*{3}}
\put(5,30){\circle*{3}}    \put(35,30){\circle*{3}}   \put(65,30){\circle*{3}}    \put(95,30){\circle*{3}}
\put(35,55){\circle*{3}}    \put(65,55){\circle*{3}}
\put(20,17.5){$e_3$}
\put(70,17.5){$e_4$}
\qbezier(5,5)(5,17.5)(5,30)     \qbezier(35,5)(35,17.5)(35,30)
\qbezier(65,5)(65,17.5)(65,30)     \qbezier(95,5)(95,17.5)(95,30)
\qbezier(5,30)(20,30)(35,30)     \qbezier(35,30)(50,30)(65,30)      \qbezier(65,30)(80,30)(95,30)
\qbezier(35,30)(35,42.5)(35,55)     \qbezier(65,30)(65,42.5)(65,55)     \qbezier(35,55)(50,55)(65,55)
\qbezier(5,5)(20,5)(35,5)   \qbezier(5,30)(20,17.5)(35,5)
\qbezier(65,5)(80,5)(95,5)   \qbezier(95,30)(80,17.5)(65,5)
\end{picture}
~~~~~~
\begin{picture}(100,60)
\put(5,15){\circle*{3}}     \put(5,45){\circle*{3}}
\put(35,15){\circle*{3}}    \put(35,45){\circle*{3}}
\put(65,15){\circle*{3}}    \put(65,45){\circle*{3}}
\put(95,15){\circle*{3}}    \put(95,45){\circle*{3}}
\qbezier(5,15)(20,15)(35,15)   \qbezier(5,45)(20,45)(35,45)
\qbezier(35,15)(50,15)(65,15)   \qbezier(35,45)(50,45)(65,45)
\qbezier(65,15)(80,15)(95,15)   \qbezier(65,45)(80,45)(95,45)
\qbezier(35,15)(50,30)(65,45)   \qbezier(35,45)(50,30)(65,15)
\qbezier(65,15)(65,30)(65,45)   \qbezier(95,15)(95,30)(95,45)
\end{picture}
\end{center}

\smallskip

For $k=4,$ we have $N(4)=8.$ By Lemma~\ref{lem_small_pkreg}, the three pseudo $4$-regular graphs given below of orders $9,$ $10,$ and $11$ which have three edges $e_1,e_2,e_3,$ $e_4,e_5,e_6,$ and $e_7,e_8,e_9$ with $4$-degree ends, respectively, provide the existence of pseudo $4$-regular graphs of order $n\geq N(k)$ except for $n=8.$ In particular, a pseudo $4$-regular graph of order $8$ is also shown as follows.
%For $k=4,$ we have $N(4)=8.$ by Lemma~\ref{lem_small_pkreg} it is sufficient to give one pseudo $4$-regular graph of order $8,$ and three of order $9,$ $10,$ and $11$ which have three edges $e_1,e_2,e_3,$ $e_4,e_5,e_6,$ and $e_7,e_8,e_9$ with $4$-degree ends, respectively. The four graphs are provided in the following.
%
\begin{center}
\begin{tikzpicture}
\draw[fill] (-1.5,1.5)coordinate(A1) circle(0.1cm);
\draw[fill] (1.5,1.5)coordinate(A2) circle(0.1cm);
\draw[fill] (-0.5,0.75)coordinate(A3) circle(0.1cm);
\draw[fill] (0.5,0.75)coordinate(A4) circle(0.1cm);
\draw[fill] (-1.5,0)coordinate(A5) circle(0.1cm);
\draw[fill] (1.5,0)coordinate(A6) circle(0.1cm);
\draw[fill] (-1.5,-1.5)coordinate(A7) circle(0.1cm);
\draw[fill] (1.5,-1.5)coordinate(A8) circle(0.1cm);
\draw[fill] (0,-2)coordinate(A9) circle(0.1cm);
\draw[] (0,0.9)coordinate(e1) node{$e_1$};
\draw[] (-0.5,0.25)coordinate(e2) node{$e_2$};
\draw[] (0.5,0.25)coordinate(e3) node{$e_3$};
\draw(A1)--(A2)--(A4)--(A3)--(A1)--(A5)--(A6)--(A2);
\draw(A3)--(A5)--(A7)--(A9)--(A8)--(A6)--(A4)--(A8)--(A5);
\draw(A3)--(A7)--(A6);
\end{tikzpicture}
~~~
\begin{tikzpicture}
\draw[fill] (0,1.5)coordinate(A1) circle(0.1cm);
\draw[fill] (-0.5,0.5)coordinate(A2) circle(0.1cm);
\draw[fill] (0.5,0.5)coordinate(A3) circle(0.1cm);
\draw[fill] (-1,-0.5)coordinate(A4) circle(0.1cm);
\draw[fill] (0,-0.5)coordinate(A5) circle(0.1cm);
\draw[fill] (1,-0.5)coordinate(A6) circle(0.1cm);
\draw[fill] (-1.5,-1.5)coordinate(A7) circle(0.1cm);
\draw[fill] (-0.5,-1.5)coordinate(A8) circle(0.1cm);
\draw[fill] (0.5,-1.5)coordinate(A9) circle(0.1cm);
\draw[fill] (1.5,-1.5)coordinate(A10) circle(0.1cm);
\draw[] (-1,0)coordinate(e4) node{$e_4$};
\draw[] (1,0)coordinate(e5) node{$e_5$};
\draw[] (0,-1.7)coordinate(e6) node{$e_6$};
\draw(A1)--(A7)--(A10)--(A1);
\draw(A2)--(A3)--(A8)--(A4)--(A6)--(A9)--(A2);
\end{tikzpicture}
~~~
\begin{tikzpicture}
\draw[fill] (-1,2)coordinate(A1) circle(0.1cm);
\draw[fill] (1,2)coordinate(A2) circle(0.1cm);
\draw[fill] (-0.5,1)coordinate(A3) circle(0.1cm);
\draw[fill] (0.5,1)coordinate(A4) circle(0.1cm);
\draw[fill] (-1,0)coordinate(A5) circle(0.1cm);
\draw[fill] (0,0)coordinate(A6) circle(0.1cm);
\draw[fill] (1,0)coordinate(A7) circle(0.1cm);
\draw[fill] (-0.5,-1)coordinate(A8) circle(0.1cm);
\draw[fill] (0.5,-1)coordinate(A9) circle(0.1cm);
\draw[fill] (-1,-2)coordinate(A10) circle(0.1cm);
\draw[fill] (1,-2)coordinate(A11) circle(0.1cm);
\draw[] (-0.6,0.4)coordinate(e7) node{$e_7$};
\draw[] (0.6,0.4)coordinate(e8) node{$e_8$};
\draw[] (0,-0.8)coordinate(e9) node{$e_9$};
\draw(A1)--(A11)--(A2)--(A10)--(A1)--(A4)--(A7)--(A5)--(A3)--(A2);
\draw(A10)--(A9)--(A8)--(A11);
\end{tikzpicture}
~~~
\begin{tikzpicture}
\draw[fill] (-1.5,1.5)coordinate(A1) circle(0.1cm);
\draw[fill] (1.5,1.5)coordinate(A2) circle(0.1cm);
\draw[fill] (0,0.75)coordinate(A3) circle(0.1cm);
\draw[fill] (-1.5,0)coordinate(A4) circle(0.1cm);
\draw[fill] (1.5,0)coordinate(A5) circle(0.1cm);
\draw[fill] (-1.5,-1.5)coordinate(A6) circle(0.1cm);
\draw[fill] (1.5,-1.5)coordinate(A7) circle(0.1cm);
\draw[fill] (0,-2)coordinate(A8) circle(0.1cm);
\draw(A1)--(A5)--(A2)--(A4)--(A1)--(A2);
\draw(A4)--(A5)--(A7)--(A4)--(A6)--(A5);
\draw(A6)--(A7)--(A8)--(A6);
\end{tikzpicture}
\end{center}
\end{proof}

\medskip

\subsection{Bounds on $N(k)$}
In this subsection,
we give some lower and upper bounds for the minimum number $N(k)$ of vertices in a pseudo $k$-regular graph.
\begin{proposition}       \label{prop_lowbd}
For positive integer $k\geq 2,$ the minimum number $N(k)$ of vertices in a pseudo $k$-regular graph satisfies
$$N(k)\geq k+3.$$
\end{proposition}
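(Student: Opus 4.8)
The plan is to establish the bound in two stages: a soft degree count giving $N(k)\ge k+2$, followed by a quantitative computation that rules out the extremal value $n=k+2$. For the first stage I would record that every pseudo $k$-regular graph $G$ satisfies $\Delta(G)\ge k+1$. Since $m_i=k$ for each vertex, Lemma~\ref{lem_sum_dimi} gives $\sum_{i=1}^n d_i^2=\sum_{i=1}^n d_i m_i=k\sum_{i=1}^n d_i$, whence $\sum_{i=1}^n d_i(d_i-k)=0$. As $G$ has no isolated vertices, $d_i\ge 1$ for all $i$; were all degrees $\le k$ with at least one strictly smaller (which must happen because $G$ is not $k$-regular), this sum would be strictly negative, a contradiction. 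Hence some vertex has degree $\ge k+1$, and counting its neighbors in the simple graph $G$ gives $n\ge\Delta(G)+1\ge k+2$.

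It then remains to exclude $n=k+2$. In that case $\Delta(G)\le n-1=k+1$, so $\Delta(G)=k+1$ and there is a vertex $i$ adjacent to all other $k+1$ vertices. I would write $H$ for the subgraph induced on $V(G)\setminus\{i\}$ and set $\delta_j=\deg_H(j)$, so that each remaining vertex $j$ has $d_j=\delta_j+1$. The condition $m_i=k$ forces $\sum_{j\ne i}d_j=k(k+1)$, hence $\sum_{j\ne i}\delta_j=k^2-1$. Expanding the identity $\sum_{\ell\sim j}d_\ell=kd_j$ at each vertex $j\ne i$, whose neighbors are $i$ (of degree $k+1$) together with its $\delta_j$ neighbors inside $H$, I would obtain after simplification the local relation $\sum_{\ell\sim_H j}\delta_\ell=(k-1)\delta_j-1$ for every such $j$.

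Summing this relation over the $k+1$ vertices of $H$ converts the left side into $\sum_\ell\delta_\ell^2$ (each $\delta_\ell$ is counted once per $H$-neighbor, i.e.\ $\delta_\ell$ times) and the right side into $(k-1)(k^2-1)-(k+1)=k(k-2)(k+1)$, giving $\sum_\ell\delta_\ell^2=k(k-2)(k+1)$. Applying Cauchy's inequality in the form $(k+1)\sum_\ell\delta_\ell^2\ge\big(\sum_\ell\delta_\ell\big)^2=(k^2-1)^2$ then yields $k(k-2)(k+1)\ge(k-1)^2(k+1)$, that is $k(k-2)\ge(k-1)^2$, which is the false inequality $0\ge 1$. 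This contradiction eliminates $n=k+2$ and completes $N(k)\ge k+3$. The main obstacle is precisely the $n=k+2$ case: the degree count stops one short of the target, and the missing unit has to be extracted from the tension between the harmonic identity at the dominating vertex $i$ and the harmonic identities at its neighbors, which the summed relation together with Cauchy's inequality makes quantitative.
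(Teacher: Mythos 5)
Your proof is correct. It follows the same two-stage skeleton as the paper (first $n\ge k+2$ via a vertex of degree at least $k+1$, then a contradiction at $n=k+2$), but the second stage takes a genuinely different route. The paper collects all degree-$(k+1)$ vertices into a set $S$ of size $a$, derives $a-1=\sum_{j\in T}(k-d_j)$ from the harmonic condition at one such vertex and $a(k+1)=\sum_{j\in T}d_j(k-d_j)$ from Lemma~\ref{lem_sum_dimi}, and closes with the termwise bound $d_j(k-d_j)\le k(k-d_j)$, giving $a(k+1)\le k(a-1)$, which is absurd. You instead fix a single dominating vertex $i$, pass to the reduced degrees $\delta_j$ in $H=G-i$, sum the local harmonic relations $\sum_{\ell\sim_H j}\delta_\ell=(k-1)\delta_j-1$ to compute $\sum_\ell\delta_\ell^2=k(k-2)(k+1)$ exactly, and finish with Cauchy--Schwarz against $\sum_\ell\delta_\ell=k^2-1$; I checked the arithmetic and it is sound, including the degenerate case $\delta_j=0$ (where your local relation already reads $0=-1$). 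Your version pins down both moments of the reduced degree sequence exactly and never needs to treat the other degree-$(k+1)$ vertices as a separate class, at the cost of a slightly longer computation; the paper's version is shorter because it only needs a one-sided termwise estimate rather than the exact second moment. Your first stage also makes explicit, via the identity $\sum_i d_i(d_i-k)=0$, what the paper dismisses as ``direct to see.''
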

\begin{proof}
It is direct to see that $N(k)\geq k+2$ since the maximum degree in a pseudo $k$-regular graph is more than $k.$ Assume that there exists a pseudo $k$-regular graph $G$ on $k+2$ vertices. Let $V(G)$ be partitioned into $S\cup T$ such that $S$ collects the vertices of degree $k+1$ and has cardinality $|S|=a.$ Then the neighbors of any vertex $i\in S$ have degree sum
$$(a-1)(k+1)+\sum_{j\in T}d_j = k(k+1) = k(a-1+|T|),$$
and hence
\begin{equation}        \label{eq_lowbd_1}
a-1=\sum_{j\in T}(k-d_j).
\end{equation}
On the other hand, from Lemma~\ref{lem_sum_dimi} we have
$$ak(k+1)+\sum_{j\in T}d_jk = a(k+1)^2+\sum_{j\in T}d_j^2,$$
which implies that
\begin{equation}        \label{eq_lowbd_2}
a(k+1)= \sum_{j\in T}d_j(k-d_j)\leq k\sum_{j\in T}(k-d_j).
\end{equation}
A contradiction occurs by substituting~\eqref{eq_lowbd_1} into~\eqref{eq_lowbd_2}.
%\smallskip
%
%Let $G$ be a pseudo $k$-regular graph of order $k+3.$ Suppose $V(G)$ is partitioned in to $V(G)=S_2\cup S_1\cup S_0\cup T$ such that $S_i$ is the set of vertices of degree $k+i$ for $i=0,1,2$ and $T$ is the set of vertices of degree less than $k.$\\
%\smallskip
%%
%\textbf{Case 1.}
%If $S_2$ is non-empty, then the sum of degrees is $(k+1)(k+2),$ since the neighbors of a vertex in $S_2$ has degree sum $k(k+2).$ If $S_1$ is non-empty, then each vertex $v\in S_1$ is non-adjacent to some vertex of degree $k+1$ (which also implies that $|S_1|\geq 2$) since the neighbors of a vertex in $S_1$ has degree sum $k(k+1).$ Consequently, each vertex in $S_0\cup T$ is adjacent to the all vertices of degree larger than $k,$ and thus of average $2$-degree larger than $k,$ a contradiction.\\
%\smallskip
%%
%\textbf{Case 2.}
%Assume that $S_2=\emptyset.$
\end{proof}

\medskip

We emphasize that $k+3$ is tight by the following example.
\begin{example}
For $k=17,$ a pseudo $17$-regular graph of order $20$ will be given and hence $N(17)=20.$ Let $S,T,$ and $U=\{u,v\}$ be the vertex sets of a complete graph on $12$ vertices, a $2$-regular graph on $6$ vertices, and $2$ isolated vertices. By adding an edge $xy$ for each pair of vertices $x\in T$ and $y\in S\cup U,$ and $12$ edges between $U$ and $S$ such that $u$ is adjacent to $6$ vertices of $S$ and $v$ is adjacent to the other $6$ vertices. One can see that the new graph is pseudo $17$-regular with $20$ vertices.
\end{example}

\medskip

Furthermore, a pseudo $k$-regular graph on $k+4$ and $k+6$ vertices always exists for each odd and even $k,$ respectively.
\begin{proposition}     \label{prop_upbd}
For each positive integer $k\geq 3,$ we have
$$N(k)\leq \left\{
\begin{array}{ll}
  k+4 & \text{if $k$ is odd} \\
  k+6 & \text{if $k$ is even}
\end{array}
\right..$$
\end{proposition}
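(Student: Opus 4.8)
The statement is an upper bound on a minimum, so the plan is to exhibit, for each $k$, an explicit pseudo $k$-regular graph of the claimed order. My organising principle is a reformulation of the harmonic condition. If $A$ is the adjacency matrix and $d=(d_1,\dots,d_n)^T$ the degree vector, then $A\mathbf 1=d$ always, while $G$ is $k$-harmonic exactly when $Ad=kd$. Writing $\chi:=d-k\mathbf 1$, these two facts give $A\chi=Ad-kA\mathbf 1=kd-kd=0$. Hence it suffices to build a simple graph whose degrees are $d_i=k+\chi_i$ for a nonzero integer vector $\chi$ with $A\chi=0$: the condition $\chi\neq 0$ makes $G$ non-regular (genuinely pseudo) and $A\chi=0$ makes it $k$-harmonic. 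In the language of degrees, $A\chi=0$ says every vertex has its neighbours' $\chi$-values summing to zero; when $\chi\in\{-1,0,1\}$ this is precisely the requirement that each vertex have equally many neighbours of degree $k+1$ as of degree $k-1$.

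Next I would pin down the degree sequence. Pairing $A\chi=0$ with $\mathbf 1$ gives $d^T\chi=0$, i.e. $\sum_i\chi_i^2=-k\sum_i\chi_i$, which is just Lemma~\ref{lem_sum_dimi} rewritten. For $\chi\in\{-1,0,1\}$ with $n_+$ entries $+1$ and $n_-$ entries $-1$, this forces $n_+(k+1)=n_-(k-1)$, i.e. $n_+:n_-=(k-1):(k+1)$. When $k$ is odd this ratio equals $\tfrac{(k-1)/2}{(k+1)/2}$, so the smallest choice is $(k-1)/2$ vertices of degree $k+1$ and $(k+1)/2$ of degree $k-1$, a total of $k$ imbalanced vertices; adjoining four vertices of degree $k$ gives order $k+4$. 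When $k$ is even the integers $k-1,k+1$ are coprime, so any $\pm1$-deviation needs $n_++n_-\geq 2k$ vertices, which cannot fit in $k+4$ (nor in $k+6$ once $k\geq 8$). This parity obstruction is exactly why the even bound is weaker, and it tells me to use deviations of size $2$: setting $\chi=2\psi$ reduces $A\chi=0$ to $A\psi=0$ with $\psi\in\{-1,0,1\}$, and the feasibility relation becomes $\sum_i\psi_i^2=-\tfrac{k}{2}\sum_i\psi_i$. Taking $(k-2)/2$ vertices of degree $k+2$, $(k+2)/2$ of degree $k-2$, and six of degree $k$ meets it and yields order $k+6$.

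With the degree sequences fixed, the remaining task is realisation. I would realise each graph as $K_n$ with an auxiliary graph $H$ deleted, where $H$ carries the complementary degrees (in the odd case a degree-$(k+1)$ vertex loses $2$ edges, a degree-$k$ vertex loses $3$, a degree-$(k-1)$ vertex loses $4$). Translating ``equally many top- and bottom-class neighbours in $G$'' into a statement about $H$ turns the problem into a small self-contained design: each top vertex directs all its $H$-edges into the bottom class, each bottom vertex balances its top- and bottom-$H$-neighbours, and the degree-$k$ filler vertices form a fixed gadget (inducing a perfect matching in $G$ for odd $k$, a $2$-regular graph such as $C_6$ for even $k$) that absorbs the residual imbalance. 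Verifying $m_i=k$ then reduces to checking this balance vertex by vertex, equivalently $A\chi=0$; one finally notes that all degrees are at least $k-2\geq 1$, so there are no isolated vertices, and $\chi\neq 0$, so the graph is pseudo $k$-regular. (The small cases $k=3$ and $k=4$ already illustrate the pattern, the first reproducing the order-$7$ graphs of Figure~1.)

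The main obstacle is the realisation step, not the counting: I must certify that these balanced degree sequences are realised by an actual \emph{simple} graph for every admissible $k$, with no repeated edges and with the filler gadget fitting exactly. The even case is the harder part, both because the $\pm2$ deviations make $A\chi=0$ a weighted balance condition and because the number of filler vertices---hence the precise constant $+6$ versus $+4$---is dictated by what the auxiliary design $H$ can accommodate. I therefore expect the bulk of the write-up to be a careful description of $H$ (with possible adjustments for the smallest values of $k$) together with the vertex-by-vertex check that every neighbourhood is balanced.
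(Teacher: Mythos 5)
Your framework is sound and in fact reverse-engineers the paper's construction precisely: the reformulation $m_i=k\iff(A\chi)_i=0$ with $\chi=d-k\mathbf 1$, the counting identity $n_+(k+1)=n_-(k-1)$, the resulting degree sequences ($(k-1)/2$ vertices of degree $k+1$, $(k+1)/2$ of degree $k-1$, four of degree $k$ for odd $k$; the $\pm2$-deviation analogue with six filler vertices for even $k$), and even the filler gadgets (a perfect matching on the four degree-$k$ vertices, a $6$-cycle on the six) all match the graphs the paper exhibits. The parity obstruction you give for even $k$ is a nice piece of motivation that the paper omits.

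However, the proposition is a pure existence claim, so the proof \emph{is} the explicit construction together with the vertex-by-vertex verification that every neighbourhood is balanced --- and this is exactly the part you defer. You state that the bulk of the write-up would be ``a careful description of $H$'' and the balance check, but you do not supply either: which $T$-vertices each $U$-vertex meets, how the missing $S$--$U$ edges are distributed so that the degree-$k$ vertices each see equally many degree-$(k+1)$ and degree-$(k-1)$ neighbours, and why no multi-edges arise, are all left open. These are not routine: the paper's construction uses a specific circulant-like pattern between $T$ and $U$ (each $t_i$ adjacent to $u_{i+1},\dots,u_{i+p-2}\pmod p$) and takes the graph on $U$ to be the complement of a path (resp.\ of the square of a path), and the balance condition only works out because of how these choices interact with the deleted $S$--$U$ edges. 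Until you write down such a graph and check $A\chi=0$ at every vertex, the bound is not established; as it stands the proposal is a correct and well-motivated plan rather than a proof.
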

\begin{proof}
We prove it by providing a pseudo $k$-regular graph of the corresponding order. Suppose that $k$ is odd. Let $p=(k+1)/2$ and $G$ be a graph on $k+4$ vertices with vertex set partition $V(G)=S\cup T\cup U,$ where $S=\{s_i\mid 1\leq i\leq 4\},T=\{t_i\mid 1\leq i\leq p-1\},$ and $U=\{u_i\mid 0\leq i\leq p-1\}.$ Let the edge set of the induced subgraph on $S\cup T$ be
$$\{s_1s_2,s_3s_4\}\cup\{xy\mid x\in T~\text{and}~y\in S\cup T\setminus\{x\}\},$$
the edge set of the induced subgraph on $U$ be
$$\{u_iu_j\mid 1 \leq i+1 < j \leq p-1\},$$
the edge set between $T$ and $U$ be
$$\{t_iu_j\mid 1\leq i\leq p-1~\text{and}~j=i+1,i+2,\ldots,i+p-2~\text{(mod $p$)}\},$$
and the edge set between $S$ and $U$ be
$$\{xy\mid x\in S, y\in U\}\setminus\{s_1u_0,s_2u_0,s_3u_{p-1},s_4u_{p-1}\}.$$
We can see that $G$ is pseudo $k$-regular.

\smallskip

For even $k,$ let $p=k/2+1$ and $G$ be a graph on $k+6$ vertices with vertex set partition $V(G)=S\cup T\cup U,$ where $S=\{s_i\mid 1\leq i\leq 6\},T=\{t_i\mid 1\leq i\leq p-2\},$ and $U=\{u_i\mid 0\leq i\leq p-1\}.$ Let the edge set of the induced subgraph on $S\cup T$ be
$$\{s_1s_2,s_2s_3,s_3s_4,s_4s_5,s_5s_6,s_6s_1\}\cup\{xy\mid x\in T~\text{and}~y\in S\cup T\setminus\{x\}\},$$
the edge set of the induced subgraph on $U$ be
$$\{u_iu_j\mid 2 \leq i+2 < j \leq p-1\},$$
the edge set between $T$ and $U$ be
$$\{t_iu_j\mid 1\leq i\leq p-2~\text{and}~j=i+2,i+3,\ldots,i+p-2~\text{(mod $p$)}\},$$
and the edge set between $S$ and $U$ be
\begin{multline*}
\{xy\mid x\in S, y\in U\}
\\
\setminus\{s_1u_0,s_1u_1,s_2u_0,s_2u_1,s_3u_0,s_3u_{p-1},s_4u_0,s_4u_{p-1},s_5u_{p-2},s_5u_{p-1},s_6u_{p-2},s_6u_{p-1}\}.
\end{multline*}
One can check that the pseudo $k$-regular property holds for $G.$
\end{proof}

\bigskip

\section{Open problems}     \label{sec_open}
In this paper, we propose conditions and applications for the sequence of degree pairs, and further utilize them to find pseudo $k$-regular graphs.
%The pseudo $3$-regular graphs of at most $10$ vertices are given, and the minimum number $N(k)$ of vertices in a pseudo $k$-regular graph is determined as $N(2)=7,$ $N(3)=7,$ $N(4)=8,$ $N(5)=9,$ $N(6)=11,$ and $N(7)=11$.
To conclude this paper, several open problems are listed in the following.
\begin{enumerate}
\item       Give a necessary and sufficient condition for a sequence of positive integers that can be the degree sequence of a finite pseudo $k$-regular graph with no isolated vertices for every positive integer $k$.
\item       Give a necessary and sufficient condition for a sequence of pairs of positive real numbers that is graphic on a finite simple graph with no isolated vertices.
\item       Is $N(k)$ non-decreasing? It is true for $k\leq 7.$
\item       For each positive integer $k\geq 8,$ determine $N(k)$, and find all pseudo $k$-regular graphs of order $N(k)$.
\item       Does there always exist a pseudo $k$-regular graph on $n$ vertices for any positive integers $k\geq 5$ and $n\geq N(k)$?
\item       Give a function $g(n,k)$ for positive integers $n,k$ that maps to the number of pseudo $k$-regular graphs of order $n$ up to isomorphism. Currently we have that $g(n,3)=0$ for $n\leq 6$ and $g(7,3)=2;$ $g(n,4)=0$ for $n\leq 7$ and $g(8,4)=1;$ $g(n,5)=0$ for $n\leq 8$ and $g(9,5)=3;$ $g(n,6)=0$ for $n\leq 10$; $g(n,7)=0$ for $n\leq 10$ and $g(11,7)=4.$
\end{enumerate}
\section*{Acknowledgments}
The authors thank Hou, Yaoping for pointing out that the $k$-harmonic graphs were previously studied in \cite{03:dgr}.
This research is supported by the Teacher Research Capacity Promotion Program of Beijing Normal University Zhuhai, the National Natural Science Foundation of China (NSFC) with Grant No. 11871107, and the Ministry of Science and Technology of Taiwan R.O.C. under the projects MOST 107-2917-I-564-004 and MOST 107-2115-M-009-009-MY2.

\end{document}